\newtheorem{thm}{Theorem}
\newtheorem{lem}[thm]{Lemma}
\begin{document}
	
	\title[]{Linear Combinations of Factorial and $S$-unit in a Ternary recurrence sequence with a double root}
	
		\author[Florian Luca]{Florian Luca}
		\address{School of Mathematics,
			Wits University, South Africa and Research Group in Algebraic Structures, King Abdulaziz University, jeddah, saudi Arabia and Max Planck Institute for Mathematics, Bonn, Germany.}
		
		\email{florian.luca@wits.ac.za.}
		
	\author[Armand Noubissie]{Armand Noubissie}
	\address{
{Graz University of Technology}\newline
{Institute for Analysis and Number Theory}\newline
{M\"{u}nzgrabenstrasse 36/II, 8010 Graz, Austria}}
\email{\tt armand.noubissie@tugraz.at}	
	\subjclass[2020]{11B65, 11D61}
	
	\keywords{Cullen Numbers, Woodall Numbers, Factorial Sequence,  Exponential Diophantine equation.}
	
	\date{\today}

	\maketitle

	\begin{abstract}
	Here, we show that if $u_n=n2^n\pm 1$, then the largest prime factor of $u_n\pm m!$ for $n\ge 0,~m\ge 2$ tends to infinity with $\max\{m,n\}$. In particular, the largest $n$ participating in the equation $u_n\pm m!=2^a3^b5^c7^d$ with $n\ge 1,~m\ge 2$ is $n=8$ for which $(8\cdot 2^8+1)-4!=3^4\cdot 5^2$.
	\end{abstract}
	
	\section{Introduction}\label{sec1}
	The numbers of the form $C_n=n2^n+1$ are called {\it Cullen} numbers.  	They were studied more than $100$ years ago by James Cullen.
	There are only $16$ known values of $n$ for which $C_n$ is prime. It is conjectured that there are infinitely many Cullen primes. Hooley \cite{Ho} proved that for most $n$, $C_n$ is composite. That is, the number of $n\le x$ such that $C_n$ is prime is $o(x)$ as $x\to\infty$.  Closely related to Cullen numbers are Woodall numbers of the form $W_n=n2^n-1$. 
	In \cite{GuLu}, the authors investigated Diophantine equations of the form $u_n\pm m!=s$, where $\{u_n\}_{n\ge 0}$ is a binary recurrent sequence of integers, and $s$ is an ${\mathcal S}$-unit, that is a positive integer whose prime factors are in a finite predetermined set of primes. In particular, they found all the Fibonacci numbers which can be written as a sum or difference between a factorial and a positive integer whose largest prime factor is at most $7$. In this paper, we revisit the above Diophantine equation but here $u_n$ is a Cullen or Woodall number. We note that both $\{C_n\}_{n\ge 0}$ and $\{W_n\}_{n\ge 0}$ are ternary recurrent sequences   of characteristic polynomial $(X-2)^2(X-1)=X^3-5X^2+8X-4$. Thus, to make our problem more general we take $\{u_n\}_{n\ge 0}$ to be a ternary recurrent sequence 
	whose characteristic polynomial has a double root. Let $f(X)=X^3-r_1X^2-r_2X-r_3$ be the characteristic polynomial of $\{u_n\}_{n\ge 0}$.
	Since it has a double root, it follows that all its roots are integers. We assume that $f(X)=(X-\alpha)^2(X-\beta)$, where $\alpha$ and $\beta$ 
	are integers. We admit that $\gcd(\alpha,\beta)=1$, which is equivalent to $\gcd(r_1,r_2,r_3)=1$ and  that it is nondegenerate. Thus, 
	$\alpha/\beta\ne \pm 1$. Then 
	$$
	u_n=p(n)\alpha^n+b\beta^n,\quad {\text{\rm where}} \quad p(X)=aX+c\in {\mathbb Q}[X],\quad a\ne 0.
	$$
	We have the following results. Let $P(m)$ be the largest prime factor of the nonzero integer $m$.
	\begin{thm}\label{thm1}
	Let $\alpha,\beta$ be coprime nonzero integers, $|\alpha|\ne |\beta|$, and $A$ be any nonzero integer. Then the estimate 
	$$
	P(u_n-Am!)\ge (1+o(1))\frac{\log n\log\log n}{\log\log\log n}
	$$
	holds as $n\to \infty$ uniformly in $m\ge 1$ such that $Am!\not\in \{b\beta^n, p(n)\alpha^n\}$.   
	\end{thm}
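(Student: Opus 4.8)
The plan is to set $s:=u_n-Am!$, to let $y:=P(s)$ when $|s|\ge 2$, and to derive from a smooth factorisation of $s$, together with lower bounds for linear forms in archimedean and $p$-adic logarithms, that $y$ cannot be substantially smaller than $\tfrac{\log n\log\log n}{\log\log\log n}$. We may assume $y<(\log n)^2$, since otherwise the estimate is trivially true. If $s\in\{0,\pm1\}$, then $Am!=u_n-s$ is a one-term perturbation of $u_n$, and applying Matveev's theorem to the (nonzero) quantity $p(n)\alpha^n/(Am!)-1$ or $b\beta^n/(Am!)-1$ bounds $n$; so assume $|s|\ge 2$ and write $s=\varepsilon\,p_1^{a_1}\cdots p_k^{a_k}$ with primes $p_1<\cdots<p_k=y$, $k\le\pi(y)$, and $\varepsilon=\pm1$. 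Put $\gamma:=\max\{|\alpha|,|\beta|\}\ge 2$, so that $\log|u_n|=n\log\gamma+O(\log n)$.

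A first, decisive step is the a priori bound $m=O_{\alpha,\beta}(ny)$. Indeed, if some prime $p\le y$ satisfies $v_p(u_n)\ge v_p(Am!)$, then $v_p(u_n)\ge v_p(m!)\ge m/p-1$, hence $p^{\,m/p-1}\le|u_n|$ and $m\le p(1+\log_p|u_n|)\le y(1+\log_2|u_n|)\le Cny$ for a constant $C=C(\gamma)$. Consequently, if $m>Cny$, then $v_p(u_n)<v_p(Am!)$ for every $p\le y$, so $v_p(s)=v_p(u_n)$ for all primes $p$, i.e.\ $s\mid u_n$ and $|s|\le|u_n|$; but $m>Cny>n$ forces $m!\ge n!\gg|u_n|$, so $|s|\ge|A|m!-|u_n|>|u_n|$, a contradiction. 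Hence $m\le Cny\ll n(\log n)^2$, and then $|s|\le|u_n|+|A|m!$ gives $\log|s|\ll n(\log n)^3$; in particular every exponent $a_i$, and $B:=\max\{n,a_1,\dots,a_k\}$, satisfy $\log B=(1+o(1))\log n$.

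Fix a small $\delta_0>0$ depending only on the ratio of the two roots and split into two regimes. If $|A|m!\le|u_n|^{1-\delta_0}$, then $|s|\asymp|u_n|$; letting $D$ be the dominant summand of $u_n$ — namely $p(n)\alpha^n$ if $|\alpha|>|\beta|$ and $b\beta^n$ if $|\beta|>|\alpha|$ (the case $b=0$ being similar and easier) — one checks that
\[
0\ <\ \Bigl|\frac{D}{\varepsilon\,p_1^{a_1}\cdots p_k^{a_k}}-1\Bigr|\ \le\ \exp(-c_1 n),
\]
the non-vanishing being precisely the hypothesis $Am!\notin\{b\beta^n,p(n)\alpha^n\}$ (if the left member were $0$, then $D=s$, i.e.\ $Am!=u_n-D\in\{b\beta^n,p(n)\alpha^n\}$). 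If instead $|A|m!>|u_n|^{1-\delta_0}$, then $m\log m\gg n$, so $m\gg n/\log n$; pick the fixed prime $p$ with $p\mid\alpha$ if $|\alpha|\ge 2$ — whence $v_p(u_n)=v_p(b)=O(1)$, since $v_p(p(n)\alpha^n)\ge n$ — and $p\mid\beta$ if $|\alpha|=1$, whence $v_p(u_n)=v_p(p(n))=O(\log n)$. Since $v_p(Am!)\ge m/p-1\gg v_p(u_n)$, the identity $u_n-s=Am!$ forces $v_p(s)=v_p(u_n)=:\kappa$; writing $s=\pm p^\kappa s'$ and $u_n=p^\kappa u_n'$, and using $u_n'\equiv b\beta^n p^{-\kappa}\ (\mathrm{mod}\ p^{\,n-\kappa})$ in the first case (resp.\ $u_n'\equiv p(n)\alpha^n p^{-\kappa}$ in the second), one obtains
\[
\Lambda_p\ :=\ \pm\,\frac{b\beta^n p^{-\kappa}}{s'}-1\qquad(\text{resp.\ with }p(n)\alpha^n),
\]
which is nonzero — again by the excluded values, noting that $b\beta^n=-s$ would make $v_p(Am!)$ bounded while $m\to\infty$ — and satisfies $v_p(\Lambda_p)\ge\min\{v_p(Am!),n\}-\kappa\gg n/\log n$.

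In both regimes the relevant form is a nonzero linear form in at most $\pi(y)+2$ logarithms whose algebraic numbers are $\alpha$ or $\beta$, the rational $b$ (or $p(n)$, of logarithmic height $O(\log n)$) and $p_1,\dots,p_k$; so the product of its height parameters is $\le(\log n)^{O(1)}(\log y)^{\pi(y)}$, while $1+\log B=(1+o(1))\log n$. Applying Matveev's theorem in the first regime and Yu's $p$-adic theorem in the second — both having a leading constant $C(\omega)=\exp(O(\omega))$ with $\omega\le\pi(y)+2$ — gives, respectively, $c_1 n\le\exp(O(\pi(y)))(\log y)^{\pi(y)}(\log n)^{O(1)}$ and $n/\log n\ll\exp(O(\pi(y)))(\log y)^{\pi(y)}(\log n)^{O(1)}$. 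Either way, taking logarithms and using $\pi(y)\to\infty$ yields $\log n\le(1+o(1))\,\pi(y)\log\log y$, and since $\pi(y)\sim y/\log y$ (prime number theorem) this inverts to
\[
P(u_n-Am!)=y\ \ge\ (1+o(1))\,\frac{\log n\log\log n}{\log\log\log n},
\]
which is the assertion. The main obstacle is the a priori control $m=O(ny)$: without it the exponents in $s$ could be exponential in $n$ for large $m$, making $\log B$ comparable to $v_p(\Lambda_p)$ and voiding the $p$-adic estimate; the remaining delicate points are the choice of the auxiliary prime $p$ (so that $v_p(u_n)=o(v_p(Am!))$ in every configuration, in particular when $b=0$ or $|\alpha|=1$) and the verification, in each case, that the constructed linear form does not vanish — which is exactly where the condition $Am!\notin\{b\beta^n,p(n)\alpha^n\}$ is used.
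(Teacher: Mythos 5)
Your overall architecture matches the paper's: split according to whether $Am!$ is small or large compared with the dominant term of $u_n$, use Matveev in the first case, and in the second case exploit that $\nu_p(Am!)$ is huge at a fixed prime $p$ dividing the dominant root (or dividing $\beta$ when $|\alpha|=1$), with the hypothesis $Am!\notin\{b\beta^n,p(n)\alpha^n\}$ supplying the non-vanishing in both cases. Your elementary a priori bound $m=O(ny)$ is a clean way to control the exponents $a_i$ and hence $\log B$; the paper reaches the same control by a dichotomy on whether some $\nu_q(m!)\le \nu_q(u_n)$, combined with its Lemma \ref{lem:muun}. The archimedean regime is fine: Matveev's constant really is $\exp(O(l))$ in the number $l$ of logarithms, so $c_1n\le \exp(O(k))(\log y)^k(\log n)^{O(1)}$ does give $\log n\le(1+o(1))\pi(y)\log\log y$ and the stated lower bound for $y$ after inversion by the prime number theorem.

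The genuine gap is in the second regime. You apply Yu's theorem to a form in $\pi(y)+2$ logarithms and assert that, like Matveev's, its leading constant is $\exp(O(\omega))$ with $\omega\le\pi(y)+2$. It is not: the constant in Yu's bound (Lemma \ref{Y}) contains the factor $(20\sqrt{l+1}\,D)^{2(l+1)}$, i.e. it is $\exp(O(l\log l))$, and every known $p$-adic analogue carries such an $l^{l}$-type dependence. With $l=k+2$ and $k=\pi(y)\sim y/\log y$ this contributes $\exp(O(\pi(y)\log\pi(y)))=\exp(O(y))$, so your inequality only yields $\log n\le O(y)$, i.e. $y\gg\log n$ — weaker by a factor $\log\log n/\log\log\log n$ than the claimed bound, so the theorem is not proved for the pairs $(n,m)$ falling in this regime. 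This is exactly why the paper never invokes Yu with more than three logarithms: it first bounds each $\nu_q(u_n)$ for $q\le y$ by a \emph{two}-logarithm application of Yu (Lemma \ref{lem:muun}), deduces in the relevant subcase that $\nu_q(s)=\nu_q(u_n)$ for all $q\le y$ and hence $h(s)=\log s=O(y^2(\log n)^2\log y)=(\log n)^{O(1)}$, and then applies Yu to a form in only the three numbers $BC^{-1}$, $s$, $\gamma$, treating $s$ as a single rational of polynomially bounded height. That yields $n\le(\log n)^{O(1)}$, a contradiction for large $n$, so the large-$m!$ regime contributes nothing to the asymptotic constant. To repair your argument you should replace the $(\pi(y)+2)$-logarithm $p$-adic step by this three-logarithm version (your a priori bound on $m$ can remain, but it does not by itself control $h(s)$ tightly enough; you need the per-prime bound $\nu_q(u_n)\ll_q(\log n)^2$).
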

	\label{thm:1}
	Consider now a finite set of primes ${\mathcal P}=\{p_1,\ldots,p_k\}$ labelled increasingly, ${\mathcal S}$ 
	the set of all integers whose prime factors are in ${\mathcal P}$ and the Diophantine equation
	\begin{equation}
	\label{eq:(5)}
	u_n=A m!+Bs\quad {\text{\rm where}}\quad s\in {\mathcal S}\quad A,B\in {\mathbb Z},\quad \max\{|A|,|B|\}\le K.
	\end{equation}
	A solution is called non-degenerate if $Am!\not\in \{b\beta^n, p(n)\alpha^n\}$. 
	We have the following explicit version of Theorem \ref{thm:1}. 
	\begin{thm}\label{thm:2}
	Let 
	$$X=\max\{|u_0|,|u_1|,|u_2|,|r_1|,|r_2|,|r_3|,p_k,K,11\}.
	$$
	Then all nondegenerate solutions of equation \eqref{eq:(5)}  have $n<e^{12X}$. 
	\end{thm}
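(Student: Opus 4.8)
The plan is to make quantitative the argument behind Theorem~\ref{thm:1}: one first pins down the parameters of the recurrence in terms of $X$, then bounds $m$ first in terms of $n$ and finally in terms of $X$ alone, and at that point an explicit lower bound for a linear form in logarithms of rational numbers, all of whose arguments are supported on primes $\le X$, forces $n$ to be small; the shape $e^{12X}$ comes from Chebyshev's bound $\prod_{p\le X}p<e^{2X}$ inserted into Matveev's estimate.

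\emph{Step 1 (the recurrence).} Since $\alpha,\beta$ are roots of $X^{3}-r_{1}X^{2}-r_{2}X-r_{3}$ with $|r_{i}|\le X$, Cauchy's bound gives $|\alpha|,|\beta|\le X+1$. Evaluating $u_{n}=p(n)\alpha^{n}+b\beta^{n}$ at $n=0,1,2$ yields a linear system for $(a,b,c)$ of determinant $-\alpha(\beta-\alpha)^{2}$, which is nonzero because $|\alpha|\ne|\beta|$ forces $\alpha\ne\beta$ and $\alpha\ne0$; as $\alpha,\beta-\alpha$ are nonzero integers, Cramer's rule yields a denominator $d\mid\alpha(\beta-\alpha)^{2}$ with $\max\{|da|,|db|,|dc|,d\}\le X^{C}$ for an absolute constant $C$. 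Consequently $|u_{n}|\le(X+1)^{3n}$ for large $n$, and since $|\alpha|\ne|\beta|$ the dominant term gives $|u_{n}|\ge2^{n/2}$ once $n$ exceeds an explicit polynomial in $X$ (the essentially binary configurations $\alpha=\pm1$, $\beta=\pm1$, $b=0$ are treated separately by easier arguments that only produce small $n$). From here on I may assume $n$ is larger than any prescribed polynomial in $X$, for otherwise $n<e^{12X}$ already; put $N:=u_{n}-Am!=Bs$, so $P(N)\le\max\{p_{k},K\}\le X$, i.e. $N$ is $X$-smooth.

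\emph{Step 2 (bounding $m$).} Comparing $p$-adic valuations in $u_{n}-Am!=N$ over the primes $p\le X$: if $v_{p}(u_{n})<v_{p}(Am!)$ for all of them then $v_{p}(N)=v_{p}(Bs)\le\log_{2}|u_{n}|$, so $|N|\le|u_{n}|^{2X^{2}}$, whence $m!\le|u_{n}|^{3X^{2}}$ and, inverting the factorial, $m<X^{3}n$; otherwise some $p\le X$ satisfies $v_{p}(u_{n})\ge v_{p}(m!)\ge m/X-1$, so $|u_{n}|\ge2^{m/X-1}$ and again $m<X^{3}n$. To cut $m$ down to a bound $M=M(X)$ depending only on $X$, one brings in the primes $q$ dividing whichever of $\alpha,\beta$ carries the dominant term: if, say, $q\mid\alpha$ and $|\alpha|>|\beta|$, then $v_{q}(p(n)\alpha^{n})\ge n$ whereas $v_{q}(b\beta^{n})=v_{q}(b)$ by coprimality, so for $m$ beyond an explicit $C_{1}(X)$ one has $v_{q}(Am!)>v_{q}(b)$ and hence $v_{q}(N)=v_{q}(b)$, which bounds the $\alpha$-part of $N$; an analogous computation at the primes of $\beta$ bounds the $\beta$-part of $N$ by $n^{O_{X}(1)}$ once $m>C_{1}(X)\log n$. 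Feeding these constraints back into $N=u_{n}-Am!$, distinguishing the cases $m!\le|u_{n}|$ and $m!>|u_{n}|$ and in each of them comparing the sizes of $|u_{n}|$, $m!$ and $|Bs|$ (using that, when these nearly cancel, the identity $u_{n}-Am!=b\beta^{n}$ or $=p(n)\alpha^{n}$ would be a degenerate solution, which is excluded), one is led to $m\le M(X)$. I expect this step to be the main obstacle, precisely because the primes of $N$ dividing neither $\alpha$ nor $\beta$ are not controlled individually, so one must weigh several size regimes against each other.

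\emph{Step 3 (linear forms in logarithms).} With $m\le M(X)$ the integer $w:=Am!$ runs over an explicitly bounded set with $\log|w|=O_{X}(1)$. Assuming $|\alpha|>|\beta|$ (the reverse inequality is symmetric, and $|\alpha|=|\beta|$ is excluded), $u_{n}-w=N$ gives
$$\Bigl|\frac{p(n)\alpha^{n}}{N}-1\Bigr|\le\frac{|b|\,|\beta|^{n}+|w|}{|N|}\le C_{2}(X)\,\rho^{\,n},\qquad\rho:=\max\Bigl\{\frac{|\beta|}{|\alpha|},\frac1{|\alpha|}\Bigr\}<1,$$
so, writing $N=\pm\prod_{p\le X}p^{e_{p}}$ with $e_{p}\le\log_{2}|N|=O_{X}(n)$ and $P_{n}:=d(an+c)\in\mathbb{Z}$, $0<|P_{n}|\le X^{C}n$, we get
$$\Lambda:=\Bigl|\,\sum_{p\le X}e_{p}\log p-n\log|\alpha|-\log|P_{n}|+\log d\,\Bigr|\le C_{2}(X)\,\rho^{\,n}.$$
If $\Lambda=0$ then $|N|=|\alpha|^{n}\,|an+c|$, which combined with $N=(an+c)\alpha^{n}+b\beta^{n}-w$ and $|\alpha|>|\beta|$ makes $|an+c|$ bounded, hence $n$ bounded (this is exactly the excluded case $Am!\in\{b\beta^{n},p(n)\alpha^{n}\}$); so $\Lambda\ne0$. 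Matveev's theorem, applied with the $t\le\pi(X)+3$ logarithms of the rationals $p\le X$, of $|\alpha|\le X+1$, of $d\le X^{C}$ and of $P_{n}$ (a single algebraic number of height $\le\log(X^{C}n)$), gives $\Lambda\ge\exp\bigl(-e^{O(X)}(\log n)^{2}\bigr)$: here the product of the heights is at most $\bigl(\prod_{p\le X}\max\{1,\log p\}\bigr)\cdot(\log(X+1))\cdot(\log X^{C})\cdot\log(X^{C}n)$, and $\log\prod_{p\le X}\max\{1,\log p\}=o(X)$ by Chebyshev, while the Matveev constant for $t$ rational logarithms is $e^{O(X)}$ and the coefficients are $O_{X}(n)$. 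Comparing with $\Lambda\le C_{2}(X)\rho^{\,n}=\exp(-cn+O_{X}(1))$, where $c=-\log\rho\ge1/(2X)$, yields $n\le e^{O(X)}(\log n)^{2}$, and the implied constants are absorbed into $12X$ for $X\ge11$, giving $n<e^{12X}$.

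\emph{Remark.} The delicate point of Step~3 is that $\log|P_{n}|$ is not the logarithm of a fixed algebraic number; it is kept under control by observing that $|P_{n}|\le X^{C}n$, so it enters as a single factor of height $O_{X}(\log n)$ and only worsens the bound by a power of $\log n$. One could alternatively remove it by passing to the companion binary recurrence $u_{n+1}-\alpha u_{n}=a\alpha^{n+1}+b(\beta-\alpha)\beta^{n}$, which carries no polynomial coefficient. This, together with the size analysis of Step~2 and the case distinctions $|\alpha|\lessgtr|\beta|$ and $\alpha=\pm1$, $\beta=\pm1$, $b=0$, is where the work concentrates.
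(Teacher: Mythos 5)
Your Step 3 is essentially the paper's treatment of one subcase (the case $AB\ne 0$ with $m!<\gamma^{n/2}$, where one applies Matveev to $1\mp Bs\,C^{-1}\gamma^{-n}$ with the $\pi(X)+O(1)$ logarithms of the primes $p\le X$, of $\gamma$, and of a rational of height $O_X(\log n)$), and that part of the plan is sound. The genuine gap is Step 2. You assert that one can first prove $m\le M(X)$ with $\log|Am!|=O_X(1)$ and only then run the archimedean linear form; but this cannot be done before $n$ is bounded, and in fact the regime it is meant to exclude really occurs: if $\nu_p(m!)>\nu_p(u_n)$ for every $p\le X$, then $\nu_p(Bs)=\nu_p(u_n)$ for all such $p$, which (via the bound $\nu_p(u_n)\ll_X\log^2 n$) forces $\log|Bs|\ll_X\log^2 n$ and hence $m!\ge |u_n|-|Bs|\gg |\gamma|^{n-1}$, i.e.\ $m\asymp n/\log n$. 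In that regime $N=Bs$ is exponentially smaller than $u_n$, the quantity $|p(n)\alpha^n/N-1|$ is not small, and your Step 3 gives nothing. Your own remark that the primes of $N$ not dividing $\alpha\beta$ are ``not controlled individually'' is exactly the missing ingredient: controlling them requires a $p$-adic lower bound for linear forms in logarithms (Yu's theorem), which the proposal never invokes. The paper uses Yu twice and both uses are indispensable: first to prove $\nu_p(u_n)\le 1.2\cdot 10^{12}\frac{p}{\log p}(\log p+\log Y)\log^2 n$ for every prime $p$ (Lemma~\ref{lem:muun}, applied to $1+b\beta^np(n)^{-1}\alpha^{-n}$), which already settles the cases $A=0$ and $B=0$ and the subcase where some $p\le X$ has $\nu_p(m!)\le\nu_p(u_n)$; and second, in the remaining subcase, to bound $\nu_q(1\mp BsC^{-1}\gamma^{-n})$ at a prime $q\mid\alpha\beta$, with $s$ itself entering as one of the three ``logarithms'' of height $O_X(\log^2 n)$, against the lower bound $\nu_q\ge n/(16X\log n)$ coming from $\min\{\nu_q(m!),\nu_q(\gamma^{n-1})\}$. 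Without this $p$-adic machinery the case of large $m$ is not closed, so the proof as proposed does not go through.

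A secondary omission: equation \eqref{eq:(5)} permits $B=0$, in which case $N=0$ and your Step 3 is vacuous; the paper handles $u_n=Am!$ separately by comparing $\nu_2(m!)>m/2\gg n/\log n$ with the bound on $\nu_2(u_n)$. Also note that the final numerical constant $12$ in $e^{12X}$ comes from the worst branch (the second application of Yu, yielding $n\le 2.8\cdot 10^{35}X^9\log X$), not from the archimedean Matveev step, so even the bookkeeping for the exponent $12X$ lives in the part of the argument your proposal omits.
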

	As for Cullen and Woodall numbers, we have $(\alpha,\beta)=(2,1)$, $p(X)=X$ and $b\beta^n\in \{\pm 1\}$. Hence, the only degenerate solutions of equation \eqref{eq:(5)} in this case are the ones for which $Am!=b\beta^n\in \{\pm 1\}$, so $|A|=1$ and $m\in \{0,1\}$ (the case $Am!=n2^n$ only gives finitely many values for $n$, for example $n=1,3$ when $A=1$). Thus, if $m\ge 2$ and $n$ is large enough then the solutions are non-degenerate. Let ${\mathcal P}=\{2,3,5,7\}$. We have the following theorem.
	\begin{thm}\label{thm:3}
	If $P(n2^n\pm 1\pm m!)\le 7$ for some $m\ge 2$ then $n\le 8$ and $m\le 7$.  The solution with the largest $n$ is $(8\cdot 2^8+1)-4!=3^4\cdot 5^2$ and the solution with the largest $m$ is $4\cdot 2^4-1+5040=3^6\cdot 7$.
	\end{thm}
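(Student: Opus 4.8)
The plan is to specialize Theorem~\ref{thm:2} to obtain a first (astronomical) bound on $n$, then to reduce it to a searchable range by a linear-forms-in-logarithms argument, and finally to run a computer search. Writing the hypothesis $P(n2^n\pm1\pm m!)\le 7$ in the shape of \eqref{eq:(5)}, we have $(\alpha,\beta)=(2,1)$, $p(X)=X$, $b\beta^n\in\{\pm1\}$, $\mathcal P=\{2,3,5,7\}$, $A,B\in\{\pm1\}$ (so $K=1$), and $r_1=5$, $r_2=-8$, $r_3=4$. For both the Cullen and the Woodall sequence one computes $X=\max\{1,3,9,5,8,4,7,1,11\}=11$, so Theorem~\ref{thm:2} gives $n<e^{132}$ for every nondegenerate solution. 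The degenerate solutions are those with $Am!=b\beta^n\in\{\pm1\}$ (forcing $m\in\{0,1\}$, excluded by $m\ge 2$) or $Am!=n2^n$ (forcing $n\in\{1,3\}$, checked by hand); and the case $n=0$ reduces to $P(m!\pm1)\le 7$, valid only for $m\le 4$. Hence from now on $n\ge1$ and the solution is nondegenerate.

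The next, and crucial, step is to bound $m$ and the $\mathcal S$-unit exponents. Since $\nu_2(u_n)=\nu_2(n2^n\pm1)=0$ for $n\ge1$ while $\nu_2(m!)\ge1$ for $m\ge2$, the relation $Bs=u_n-Am!$ forces $\nu_2(Bs)=0$: the $\mathcal S$-unit is odd, so $s=3^b5^c7^d$. Now I would split according to which of $|u_n|,\ m!,\ s$ is largest. If $s>|u_n|$, then $u_n=Am!+Bs$ forces $m!$ to be comparable with $s$; in that case $b\le\nu_3(m!)$, $c\le\nu_5(m!)$, $d\le\nu_7(m!)$ (otherwise a prime-power divisor of $m!$ would already divide $u_n=n2^n\pm1$, which bounds $b,c,d$), so combining Legendre's estimate $\nu_p(m!)<m/(p-1)$ with Stirling's $\log m!>m\log m-m$ yields $m\log m<3m$, i.e. an absolute bound $m<M_0$. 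If $s\le|u_n|$, then $m!=|u_n-Bs|$ together with $2^{\nu_2(m!)}\mid u_n-Bs$ forces $\nu_2(m!)\le\log_2\!\bigl(2|u_n|\bigr)$, hence $m<n+O(\log n)$; moreover, reducing $n2^n\pm1=m!\pm s$ modulo $3^{\nu_3(m!)},5^{\nu_5(m!)},7^{\nu_7(m!)}$ identifies each of $b,c,d$ with a $p$-adic valuation of $n2^n\pm1$, so again $b,c,d=O(n)$, unless $m$ is once more absolutely bounded. In every branch one ends with $n<e^{132}$ and $m,b,c,d=O(n)$, and with $m$ bounded by an explicit constant in the branch that matters.

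Once $m$ is under control the equation reads $3^b5^c7^d=\pm(n2^n\pm1-C)$ with $C=Am!$ a fixed integer, whence $|3^b5^c7^d-n2^n|\le|C|+1$. Dividing by $n2^n$ produces a small linear form in logarithms $\Lambda=b\log3+c\log5+d\log7-n\log2-\log n$ with $|\Lambda|\ll |C|\,n^{-1}2^{-n}$, to which I would apply Matveev's theorem ($n$ entering as an algebraic number of height $\log n\le 132$). Comparing the resulting lower bound for $\log|\Lambda|$ with the upper bound above collapses $n<e^{132}$ to a bound of size roughly $10^{15}$; a Baker--Davenport/LLL reduction applied to $\Lambda$, using the continued fraction expansions of $\log3/\log2,\ \log5/\log2,\ \log7/\log2$, then lowers $n$ to at most a few hundred. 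A direct search over the remaining $(n,m,b,c,d)$, together with the small cases $n\in\{0,1\}$ and the branches in which $m$ is absolutely bounded, leaves exactly the solutions listed, the extremal ones being $(8\cdot2^8+1)-4!=3^4\cdot5^2$ and $4\cdot2^4-1+5040=3^6\cdot7$.

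The main obstacle is precisely the passage from the cheap bound $m=O(n)$ to an absolute (or at worst logarithmic) bound on $m$: only then does $m!$ involve boundedly many primes and only then is the linear-forms step feasible with a manageable number of logarithms. The delicate situation is when $m!$ is co-dominant with $n2^n$, where $m$ could a priori be as large as $\asymp n/\log n$; this is the case in which the $p$-adic comparisons modulo the prime-power divisors of $m!$ must be pushed hardest, and where most of the genuine work of the proof will lie.
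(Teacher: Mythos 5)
There is a genuine gap, and it sits exactly where you yourself locate ``most of the genuine work of the proof''. Your setup is fine: $X=11$ and Theorem~\ref{thm:2} give $n<e^{132}<10^{58}$, and $s$ is indeed odd. But every branch of your case analysis that is not ``all of $b,c,d$ are at most the corresponding $\nu_p(m!)$'' reduces to the statement that some large power $p^{\nu_p(m!)}$ (or $p^{b}$, etc.) divides $n2^n\pm 1$, and your parenthetical claim that this ``bounds $b,c,d$'' begs the question: with $n$ ranging up to $10^{58}$, nothing a priori prevents $\nu_p(n2^n\pm1-\varepsilon m!)$ from being enormous, and bounding it is the heart of the matter. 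The paper's proof supplies precisely this missing ingredient by a $p$-adic Hensel-type lifting (Lemma~\ref{Ho} and the ensuing computations): the condition $p^{k}\mid n2^n+1-t$ determines $n$ modulo $(p-1)p^{k}$, the residues are lifted explicitly in Mathematica for $k$ up to about $80$--$130$ and $p\in\{3,5,7\}$, and the resulting smallest admissible $n$ already exceeds $10^{58}$. Only this yields $\nu_3(s)\le 130$, $\nu_5(s)\le 100$, $\nu_7(s)\le 80$ (and, via $\nu_3(m!)$, the bound $m<500$). Your Legendre/Stirling comparison then works in the remaining branch, but without the $p$-adic computation the argument does not close.

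A secondary problem is your endgame. Once $m$ and $b,c,d$ are bounded you propose Matveev plus Baker--Davenport/LLL on $\Lambda=b\log3+c\log5+d\log7-n\log2-\log n$; but this form contains $\log n$ with $n$ unknown, so it is not a fixed linear form in a fixed set of logarithms and the standard continued-fraction/LLL reduction does not apply as described (each candidate $n$ changes the form). Moreover your upper bound $|\Lambda|\ll|C|n^{-1}2^{-n}$ is useless in the co-dominant regime $m!\asymp n2^n$, which you have not excluded. The paper avoids all of this with an elementary $2$-adic observation: from $n2^n=\mp1\pm m!\pm s$ and $\nu_2(m!)$ large one gets $n\le\nu_2(n2^n)\le\max\nu_2(3^a5^b7^c\pm1\pm m!)$, a finite computation over the already-bounded ranges that yields $n\le 30$, after which a direct search finishes. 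You should replace the linear-forms reduction by this valuation argument, and, above all, supply the $p$-adic lifting step that your sketch presupposes.
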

	The full set of solutions is given at the end of the paper. 
		\section{preliminaries}\label{sec2}
		We start by recalling some basic notions from height theory. The absolute logarithmic height $h(\eta)$ of an algebraic number $\eta$  is given by the formula  
		\begin{equation}\label{eqn2.1}
		h(\eta)=\frac{1}{d(\eta)}\left(\log |a_0| +\sum _{i=1}^{d(\eta)} \log \left(\max \{|\eta ^{(i)}|,1\}\right)\right),
		\end{equation}
		where $d(\eta)$ is the degree of $\eta$ over $\mathbb{Q}$, and 
		\begin{equation}\label{eqn2.2}
		f(X)=a_0\prod _{i=1}^{d(\eta)} \left(X-\eta ^{(i)}\right)\in \mathbb{Z}[X]
		\end{equation}
		is the minimal polynomial of $\eta$  of degree $d(\eta)$ over $\mathbb{Z}$. We use the following properties of the absolute logarithm height function $h(\cdot)$:
		\begin{lem}\label{H}
			Let $\eta, ~\gamma$ be the algebraic numbers. Then we have:
			\begin{itemize}
				\item $	h(\eta \pm \gamma)\leq h(\eta) + h(\gamma) + \log 2,$
				\item $	h(\eta  \gamma^{\pm 1})\leq h(\eta) + h(\gamma),$
				\item $	h(\eta^s )= \vert s \vert  h(\eta) \quad (s \in \mathbb{Z}).$
			\end{itemize}
		\end{lem}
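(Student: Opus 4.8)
The plan is to derive all three inequalities from the \emph{adelic description} of the Weil height rather than directly from the minimal-polynomial formula \eqref{eqn2.1}. Recall that for a number field $K$ containing $\eta$, with $M_K$ its set of places, local degrees $d_v=[K_v:\mathbb{Q}_v]$, and absolute values normalized so that the product formula $\sum_{v\in M_K}d_v\log|\eta|_v=0$ holds for every nonzero $\eta\in K$, one has
\[
h(\eta)=\frac{1}{[K:\mathbb{Q}]}\sum_{v\in M_K}d_v\log^+|\eta|_v,\qquad \log^+t:=\log\max\{t,1\},
\]
and the right-hand side is independent of the chosen $K$. The equivalence of this expression with \eqref{eqn2.1} is the standard fact that the Mahler measure of the minimal polynomial decomposes as a product over the places (Gauss's lemma at the finite places, the archimedean embeddings at the infinite ones), and I would simply cite it from a textbook such as Bombieri--Gubler or Waldschmidt. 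For the lemma I take $K=\mathbb{Q}(\eta,\gamma)$, so that $\eta$, $\gamma$ and all the relevant combinations lie in $K$ (with the convention $h(0)=0$, and $\gamma\ne 0$ wherever $\gamma^{-1}$ appears).

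\emph{Step 1: $h(\eta^s)=|s|\,h(\eta)$.} For $s\ge 0$ this is immediate from $|\eta^s|_v=|\eta|_v^{\,s}$, hence $\log^+|\eta^s|_v=s\log^+|\eta|_v$ at every $v$, and summing. For negative exponents it suffices to show $h(\eta^{-1})=h(\eta)$ for $\eta\ne 0$. This follows from the elementary identity $\log^+t-\log^+t^{-1}=\log t$ (check the two cases $t\ge 1$ and $0<t<1$) applied with $t=|\eta|_v$: multiplying by $d_v$ and summing over $v$, the left-hand side becomes $[K:\mathbb{Q}]\bigl(h(\eta)-h(\eta^{-1})\bigr)$ while the right-hand side is $\sum_v d_v\log|\eta|_v=0$ by the product formula.

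\emph{Step 2: the product and sum inequalities.} Both rest on the submultiplicativity of $\max\{\,\cdot\,,1\}$. At every place $v$ one has $\max\{|\eta\gamma|_v,1\}\le\max\{|\eta|_v,1\}\max\{|\gamma|_v,1\}$, so $\log^+|\eta\gamma|_v\le\log^+|\eta|_v+\log^+|\gamma|_v$; multiplying by $d_v/[K:\mathbb{Q}]$ and summing gives $h(\eta\gamma)\le h(\eta)+h(\gamma)$, and then $h(\eta\gamma^{-1})\le h(\eta)+h(\gamma^{-1})=h(\eta)+h(\gamma)$ by Step 1. For the sum, distinguish the places: if $v$ is non-archimedean the ultrametric inequality gives $|\eta\pm\gamma|_v\le\max\{|\eta|_v,|\gamma|_v\}$, hence $\log^+|\eta\pm\gamma|_v\le\log^+|\eta|_v+\log^+|\gamma|_v$; if $v$ is archimedean then $|\eta\pm\gamma|_v\le|\eta|_v+|\gamma|_v\le 2\max\{|\eta|_v,|\gamma|_v\}$, hence $\log^+|\eta\pm\gamma|_v\le\log 2+\log^+|\eta|_v+\log^+|\gamma|_v$. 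Summing, the archimedean correction contributes exactly $\frac{1}{[K:\mathbb{Q}]}\sum_{v\mid\infty}d_v\log 2=\log 2$, since $\sum_{v\mid\infty}d_v=[K:\mathbb{Q}]$, which yields $h(\eta\pm\gamma)\le h(\eta)+h(\gamma)+\log 2$.

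\emph{Main obstacle.} There is no deep difficulty here; the only point needing care is the bridge between the definition \eqref{eqn2.1} via the minimal polynomial and the adelic formula used above, together with the invariance of the adelic sum under field extension — precisely what makes the place-by-place estimates legitimate and lets us work in the common field $\mathbb{Q}(\eta,\gamma)$. If one insisted on arguing directly from \eqref{eqn2.1}, the sum inequality would force a comparison of the Mahler measures of the (possibly different-degree) minimal polynomials of $\eta$, $\gamma$ and $\eta\pm\gamma$, which is doable via resultants but considerably more cumbersome; the adelic route sidesteps it entirely.
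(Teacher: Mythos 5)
Your proof is correct. Note, however, that the paper does not prove this lemma at all: it is quoted as a list of standard properties of the Weil height (the sort of thing normally cited from Waldschmidt or Bombieri--Gubler), so there is no "paper proof" to match against. Your adelic argument is exactly the standard textbook derivation, and each local step checks out: $\log^+|\eta^s|_v=s\log^+|\eta|_v$ for $s\ge 0$, the identity $\log^+t-\log^+t^{-1}=\log t$ plus the product formula for $h(\eta^{-1})=h(\eta)$, submultiplicativity of $\max\{\cdot,1\}$ for the product bound, and the ultrametric versus triangle-inequality split with $\sum_{v\mid\infty}d_v=[K:\mathbb{Q}]$ producing the single $\log 2$. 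The one thing to keep in view is that the genuine content is concentrated precisely where you cite rather than prove: the equivalence of the adelic sum with the minimal-polynomial definition \eqref{eqn2.1} (via Gauss's lemma and the factorization of the Mahler measure over places) and the independence of the sum from the choice of the field $K$; since you invoke these as known, your argument is a correct reduction of the lemma to those standard facts rather than a proof from \eqref{eqn2.1} alone, which is entirely reasonable here. Your conventions ($h(0)=0$, $\gamma\ne 0$ when $\gamma^{-1}$ or negative exponents appear) also correctly patch the degenerate cases that the paper's statement leaves implicit.
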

Let $\mathbb{K}$ be a  number field of degree $D$ over $\mathbb{Q}$ embedded in $\mathbb{C}$.  Let   
$\eta_1,\ldots,\eta_l\in \mathbb{K}$ not $0$ or $1$ and $d_1,d_2,\ldots,d_l \in \mathbb{Z}^*.$ We put $B^*=\max\{\vert d_1 \vert,\ldots, \vert d_l \vert, 3 \}$. We take $A_j\geqslant \max\{Dh(\eta_j),\vert \log(\eta_j)\vert,0.16\}~(1\leqslant j\leqslant l)$, $\omega=A_1A_2\cdots A_l$,  and 
$$\Lambda =\eta_1^{d_1}\cdots \eta_l^{d_l}-1.
$$
		
		The following lemma is a consequence of Matveev's theorem \cite{M}.
		
		\begin{lem}[See Theorem 9.4 in \cite{Bu}] \label{BMS}
			If $\Lambda \neq 0$ and $\mathbb{K}\subseteq \mathbb{R}$,  then 
			$$\log(\vert \Lambda \vert) > -1.4\times 30^{l+3}l^{4.5}D^2\omega \log(eD)\log(eB^*).
			$$
		\end{lem}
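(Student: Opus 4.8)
The plan is not to prove this from scratch: Lemma~\ref{BMS} is the real-field specialization of Matveev's lower bound for linear forms in logarithms of algebraic numbers \cite{M}, and a self-contained proof would require the full apparatus behind that theorem (interpolation determinants, Siegel-type zero estimates, multiplicity lemmas), which is far outside the scope of a short argument. Instead, I would obtain it exactly as is done in \cite{Bu}: start from Matveev's general estimate and reduce the unwieldy numerical constant appearing there to the clean expression quoted here.

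First, I would recall Matveev's theorem in the convenient shape. Under the stated hypotheses on $\eta_1,\dots,\eta_l$ and $d_1,\dots,d_l$, with $A_j\ge\max\{Dh(\eta_j),|\log\eta_j|,0.16\}$, $\omega=A_1\cdots A_l$, and $B^*\ge\max\{|d_1|,\dots,|d_l|,3\}$, Matveev gives, whenever $\Lambda\ne0$,
$$
\log|\Lambda|>-C(l,\kappa)\,D^2\,\omega\,\log(eD)\,\log(eB^*),
$$
where $\kappa=1$ if $\mathbb{K}\subseteq\mathbb{R}$ and $\kappa=2$ otherwise, and $C(l,\kappa)$ is an explicit constant assembled from factors of the shape $\kappa^{-1}e^l$, $(4(l+1))^{l+1}/l!$, and a low-degree polynomial in $l$. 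Specializing to $\kappa=1$, which is the only case we need, makes $C(l,1)$ completely explicit.

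Second, and this is the only place any real work enters, I would verify the elementary majorization $C(l,1)\le 1.4\times 30^{l+3}l^{4.5}$ for all integers $l\ge1$: Stirling's inequality turns $e^l(4(l+1))^{l+1}/l!$ into a quantity of order $(\mathrm{const})^l\sqrt{l}$, the remaining polynomial factors in $l$ are absorbed into $l^{4.5}$, and the base $30$ in $30^{l+3}$ dominates the resulting exponential growth rate; one checks $l=1,2$ directly and then uses monotonicity of the relevant ratio to cover all larger $l$. Substituting this bound for $C(l,1)$ into the displayed estimate yields Lemma~\ref{BMS} verbatim. The only ``obstacle'' is therefore pure bookkeeping — transcribing Matveev's constant faithfully and carrying out the crude numerical comparison — which is precisely why the paper imports the statement wholesale from \cite{Bu}; it will be used in the sequel only as an off-the-shelf tool for bounding the linear forms in logarithms that arise.
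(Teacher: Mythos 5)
Your proposal matches what the paper does: Lemma~\ref{BMS} is imported verbatim as Theorem 9.4 of \cite{Bu}, which is itself obtained by specializing Matveev's estimate \cite{M} to the real case and majorizing his explicit constant by $1.4\times 30^{l+3}l^{4.5}$, exactly the bookkeeping you describe. No further argument is given or needed in the paper, so your treatment is appropriate and essentially identical in approach.
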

		A $p$-adic analogue of Matveev's theorem is due to  Yu \cite{Y}. Here we recall this result.  Let $\pi$ be a prime ideal in the ring $\mathcal{O}_{\mathbb{K}}$ of algebraic integer in $\mathbb{K}$. Let $e_{\pi}$ and $f_{\pi}$ be respectively  the ramification index and the inertial degree of $\pi$. Let $p$ be the prime number above $\pi$ and $\nu_{\pi}(\eta)$ be the order at  which  $\pi$ appears in the prime factorization of the principal fractional ideal $\eta \mathcal{O}_{\mathbb{K}}$.  
\begin{lem}[Yu \cite{Y}] \label{Y}
Let $H_j \geq \max \{h(\eta_j), \log p \}$, for $j = 1, 2, \ldots, l$. 	If $\Lambda \neq 0$,  then 
$$\nu_{\pi}(\vert \Lambda \vert) \leq 19 (20 \sqrt{l+1}D)^{2(l+1)}e_{\pi}^{l-1}\dfrac{p^{f_{\pi}}}{(f_{\pi}\log p)^2}\log(e^5lD)H_1\cdots H_l \log(B^*).
$$
\end{lem}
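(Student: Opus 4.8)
The plan is to run the Gel'fond--Baker method for lower bounds of linear forms in logarithms in the $p$-adic setting: Yu's estimate is the non-archimedean twin of Matveev's theorem quoted as Lemma~\ref{BMS}, and its proof follows the same four classical steps --- Siegel's lemma to build an auxiliary polynomial, an analytic smallness estimate, an extrapolation that propagates vanishing, and a zero estimate for a forced contradiction --- but carried out over a $p$-adic completion instead of over $\mathbb{C}$. One may assume $\nu_\pi(\Lambda)$ is large (otherwise nothing is to prove) and normalize: replacing each $\eta_j$ by a power $\eta_j^{\,e}$ with $e$ a suitable divisor of $e_\pi(p^{f_\pi}-1)$ (times an extra power of $\pi$ when $p$ is small) puts $\eta_j^{\,e}\equiv 1\pmod{\pi}$, so that the $p$-adic logarithm $\log_\pi(\eta_j^{\,e})$ is given by a convergent series in $\mathbb{K}_\pi\hookrightarrow\mathbb{C}_p$. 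The linear form $L=d_1\log_\pi\eta_1+\cdots+d_l\log_\pi\eta_l$ then has $\nu_\pi(L)$ essentially equal to $\nu_\pi(\Lambda)$ up to a bounded correction, and the price of this normalization --- the size of $e$ and the radius of convergence $p^{-1/(p-1)}$ of $\exp_p$ --- is exactly what will reappear as the arithmetic factor $e_\pi^{\,l-1}\,p^{f_\pi}/(f_\pi\log p)^2$.

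Next I would construct the auxiliary function. Fix integer parameters $L_1,\dots,L_l$ and $S$ (to be optimized at the very end) so that the number of coefficients $\prod_j(L_j+1)$ slightly exceeds the number of Taylor conditions to be imposed; Siegel's lemma over $\mathbb{Z}$ then yields a nonzero
$$
P(X_1,\dots,X_l)=\sum_{\lambda_1=0}^{L_1}\cdots\sum_{\lambda_l=0}^{L_l}c_{\lambda_1\dots\lambda_l}X_1^{\lambda_1}\cdots X_l^{\lambda_l}\in\mathbb{Z}[X_1,\dots,X_l]
$$
with $\log\max|c_{\lambda_1\dots\lambda_l}|$ bounded in terms of the $L_j$, the $H_j$ and $D$, such that the one-variable $p$-adic analytic function $\Phi(z):=P(\eta_1^{\,z},\dots,\eta_l^{\,z})$, with $\eta_j^{\,z}:=\exp_p(z\log_\pi\eta_j)$, vanishes together with all derivatives up to a prescribed order at $z=0,1,\dots,S$. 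The hypothesis $\Lambda\ne0$ enters here: it guarantees that the point $(\eta_1,\dots,\eta_l)$ does not lie in a proper algebraic subgroup of $\mathbb{G}_m^l$ of too small a degree, so the Siegel system is genuinely underdetermined and the construction does not collapse.

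Using that $\nu_\pi(L)$ is large, one shows that the derivatives $\Phi^{(t)}(s)$ stay $p$-adically extremely small at integer arguments $s>S$, because $\Phi$ differs by a high power of $\pi$ from the analogous function built from the hypothetical identity $L=0$. A $p$-adic Schwarz-type inequality --- estimating a $p$-adic analytic function on a disk by its values at many points, together with the fact that such a function with too many zeros counted with multiplicity must vanish identically --- then lets the vanishing of $\Phi$ propagate from $\{0,\dots,S\}$ to a strictly larger set of points and orders. Iterating, $\Phi$ is forced to vanish beyond what its multidegrees and the size of the evaluation set permit, hence $\Phi\equiv0$, unless $\nu_\pi(\Lambda)$ was not too large after all; but $\Phi\equiv0$ is ruled out by a zero estimate on $\mathbb{G}_m^l$ (Philippon's theorem, or, in this purely multiplicative setting, an elementary combinatorial zero-multiplicity bound), which again contradicts $\Lambda\ne0$ after passing to the appropriate quotient torus. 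Choosing $L_1,\dots,L_l$ and $S$ of the optimal sizes --- powers of $\log B^*$, $H_j$, $D$ and $l$ --- and tracking every constant through the inequalities converts this contradiction into the asserted bound
$$
\nu_\pi(|\Lambda|)\le 19\,(20\sqrt{l+1}\,D)^{2(l+1)}\,e_\pi^{\,l-1}\,\frac{p^{f_\pi}}{(f_\pi\log p)^2}\,\log(e^5 l D)\,H_1\cdots H_l\,\log(B^*).
$$

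The genuinely hard part --- and the reason this result is quoted rather than reproved here --- is twofold. First, the tiny $p$-adic radii of convergence of $\exp_p$ and $\log_p$ must be controlled throughout; this is precisely what distinguishes the argument from the real case of Lemma~\ref{BMS} and what forces the shape of the arithmetic factor and the dependence on $e_\pi$, $f_\pi$ and $p$. Second, obtaining the \emph{linear} dependence on $\log B^*$, rather than a power of $B^*$, requires the refined interpolation-determinant and inductive machinery developed by Matveev and by Yu, in place of the cruder original Baker-type auxiliary construction; it is this refinement that makes the estimate strong enough for the applications in this paper. The remaining ingredients --- Siegel's lemma, the $p$-adic Schwarz lemma, and the $\mathbb{G}_m$ zero estimate --- are by now standard, and once the parameters are chosen the proof is essentially careful bookkeeping.
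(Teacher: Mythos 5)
This lemma is not proved in the paper at all: it is quoted verbatim (up to notation) from Yu's work \cite{Y} as an external black box, exactly as Lemma~\ref{BMS} is quoted from Matveev via \cite{Bu}. So the honest comparison is between your sketch and the literature proof you are paraphrasing, and there the gap is substantive. The content of the statement is the \emph{explicit} numerical bound --- the factor $19(20\sqrt{l+1}\,D)^{2(l+1)}e_{\pi}^{l-1}p^{f_{\pi}}/(f_{\pi}\log p)^{2}\log(e^{5}lD)$ together with the linear dependence on $H_1\cdots H_l\log(B^*)$ --- and your argument never produces it; the phrase ``tracking every constant through the inequalities converts this contradiction into the asserted bound'' is precisely the part that occupies the bulk of Yu's papers (choice of the rational prime versus ramified/inert behaviour of $\pi$, the Kummer-descent on the $\eta_j$, the interpolation-determinant refinements giving $\log B^*$ instead of a power of $B^*$, and the painstaking optimisation of $L_1,\dots,L_l,S$). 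A sketch that defers all of this cannot be accepted as a proof of an explicit estimate, which is why the paper (correctly) cites rather than reproves it.

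There is also a specific inaccuracy in your outline: you claim that $\Lambda\neq 0$ ``guarantees that the point $(\eta_1,\dots,\eta_l)$ does not lie in a proper algebraic subgroup of $\mathbb{G}_m^{l}$ of too small a degree, so the Siegel system is genuinely underdetermined.'' That is not so. Yu's theorem imposes no multiplicative-independence hypothesis on $\eta_1,\dots,\eta_l$ (and in the application in Lemma~\ref{lem:muun} of this paper one of the $\eta_j$ involves $p(n)b^{-1}$, about which no independence is known); the degenerate situations where the point does lie on a small subtorus are not excluded by $\Lambda\neq 0$ but are handled \emph{inside} the proof, by passing to a quotient torus and an induction on $l$, with the constants adjusted accordingly. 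As written, your construction step would collapse exactly in those cases, so even as a roadmap the argument needs this repair before it matches the structure of the actual proof.
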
	
		
		The following result is well-known and can also be proved using the fact that the map $x \mapsto x/ (\log x)^m$ is increasing when $x> e^m$ and $m\geq 1$.	
		
		\begin{lem}\label{3}
			If $s\geq 1$,  $T>(4s^2)^s$ and $T> x/ (\log x)^s$, then
			$$x< 2^sT (\log T)^s.
			$$
		\end{lem}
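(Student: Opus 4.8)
The plan is to argue by contradiction, exploiting precisely the monotonicity pointed out above. I would set $g(t):=t/(\log t)^s$; differentiating gives $g'(t)=(\log t)^{-s-1}(\log t-s)$, so $g$ is strictly increasing on $(e^s,\infty)$. Put $y:=2^sT(\log T)^s$. Since $s\ge 1$ forces $(4s^2)^s\ge 4^s>e^s$, and $\log T>\log 4>1$ gives $2^s(\log T)^s\ge 1$, one has $y\ge T>e^s$, so $y$ lies in the increasing range of $g$. Now suppose, for contradiction, that $x\ge y$; then $x>e^s$ as well, hence $g(x)\ge g(y)$.

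The heart of the matter is to show $g(y)\ge T$. Here $\log y=s\log 2+\log T+s\log\log T$, so after dividing by $T$ and taking positive $s$-th roots, $g(y)\ge T$ is equivalent to $2\log T\ge s\log 2+\log T+s\log\log T$, that is, to $\log T\ge s\log(2\log T)$. Writing $L:=\log T$, the hypothesis $T>(4s^2)^s=(2s)^{2s}$ is exactly $L>2s\log(2s)$, and the goal becomes $L\ge s\log(2L)$. The function $\phi(L):=L-s\log(2L)$ satisfies $\phi'(L)=1-s/L>0$ for $L>s$, and $2s\log(2s)>s$ because $2\log 2>1$; so it suffices to check $\phi$ at $L_0:=2s\log(2s)$, where a short computation gives $\phi(L_0)=s\log\!\big(s/\log(2s)\big)\ge 0$, since $s\ge\log(2s)$ for every $s\ge 1$ (the function $s\mapsto s-\log(2s)$ is nondecreasing on $[1,\infty)$ and positive at $s=1$). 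Hence $L\ge s\log(2L)$ and $g(y)\ge T$.

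Putting the two steps together yields $x/(\log x)^s=g(x)\ge g(y)\ge T$, contradicting the hypothesis $T>x/(\log x)^s$; therefore $x<y=2^sT(\log T)^s$, which is the claim.

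Everything here is elementary, so I do not expect a genuine obstacle; the only delicate point is the bookkeeping in the second paragraph — reducing $g(y)\ge T$ to the scalar inequality $L\ge s\log(2L)$ and then matching it against the precise form $T>(4s^2)^s$. The constant $4s^2=(2s)^2$ is tuned exactly so that $\phi(L_0)\ge 0$ at $L_0=2s\log(2s)$, and keeping track of this alignment (instead of sloppily losing a multiplicative constant) is the part that requires care.
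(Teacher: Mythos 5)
Your proof is correct, and it follows exactly the route the paper indicates: the paper gives no written proof of this lemma, remarking only that it ``can be proved using the fact that the map $x\mapsto x/(\log x)^s$ is increasing for $x>e^s$,'' which is precisely the monotonicity argument you carry out in full. Your verification that $g(y)\ge T$, including the computation $\phi(L_0)=s\log\bigl(s/\log(2s)\bigr)\ge 0$ and the checks $L_0>s$ and $y>e^s$, is accurate.
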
	
		
	\section{Bounds}
	Put 
	$$
	Y:=\max\{|r_1|,|r_2|,|r_3|,|u_0|,|u_1|,|u_2|\}.
	$$
	Note that $Y\ge 3$. Indeed, since by the Viete relations $\alpha^2\beta=r_3$, we immediately get that $Y\ge 3$ except for the cases $\alpha=\pm 1$ and $\beta=\pm 2$. Calculating all $(X\pm 1)^2(X\pm 2)$, we get that there is always a coefficient which is at least $3$ in absolute value.  
	The following parallels Lemma 8 in \cite{GuLu}.
	\begin{lem}
	\label{lem:heights}
	We have $\max\{|\alpha|,|\beta|\}\le Y$. Further, $a,b,c$ are rational numbers of numerators at most $4Y^3$ and denominators at most $Y^3$. In particular, 
	$$\max\{h(a),h(b),h(c)\}\le \log(4Y^3).
	$$  
	\end{lem}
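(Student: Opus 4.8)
The plan is to read off $\alpha,\beta$ and the coefficients $a,b,c$ from the data defining the sequence and then to estimate all of them in terms of $Y$. First I expand $f(X)=(X-\alpha)^2(X-\beta)=X^3-(2\alpha+\beta)X^2+(\alpha^2+2\alpha\beta)X-\alpha^2\beta$ and compare with $X^3-r_1X^2-r_2X-r_3$ to get the Viete relations $r_1=2\alpha+\beta$, $r_2=-(\alpha^2+2\alpha\beta)$, $r_3=\alpha^2\beta$. Since $\alpha,\beta$ are nonzero integers, $|\beta|\ge 1$, so $\alpha^2\le\alpha^2|\beta|=|r_3|\le Y$; hence $|\alpha|\le\sqrt{Y}\le Y$, and then $|\beta|=|r_3|/\alpha^2\le|r_3|\le Y$. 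This proves $\max\{|\alpha|,|\beta|\}\le Y$. For later use I also record the sharper facts $\alpha^2\le Y$ and, from the relation for $r_2$, that $|2\alpha\beta|\le|r_2|+\alpha^2\le 2Y$.

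Next I solve for $a,b,c$. Evaluating $u_n=(an+c)\alpha^n+b\beta^n$ at $n=0,1,2$ produces the linear system $u_0=b+c$, $u_1=\alpha a+\beta b+\alpha c$, $u_2=2\alpha^2a+\beta^2b+\alpha^2c$, whose coefficient matrix has determinant $\alpha(\alpha-\beta)^2$; this is nonzero since $\alpha\ne0$ and $\alpha\ne\beta$ (the sequence is nondegenerate, so $\alpha/\beta\ne\pm1$). Solving the system by Cramer's rule, followed by a short simplification, gives
$$a=\frac{u_0\alpha\beta-u_1(\alpha+\beta)+u_2}{\alpha(\alpha-\beta)},\qquad b=\frac{u_0\alpha^2-2u_1\alpha+u_2}{(\alpha-\beta)^2},\qquad c=u_0-b=\frac{u_0\beta(\beta-2\alpha)+2u_1\alpha-u_2}{(\alpha-\beta)^2}.$$
In particular $a,b,c\in\mathbb Q$, with reduced denominators dividing $\alpha(\alpha-\beta)$ or $(\alpha-\beta)^2$.

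It remains to estimate numerators and denominators, using $Y\ge 3$, the bounds $|\alpha|,|\beta|,|u_i|\le Y$, and the sharper $\alpha^2\le Y$, $|2\alpha\beta|\le 2Y$. For the denominators, $(\alpha-\beta)^2=\alpha^2-2\alpha\beta+\beta^2\le Y+2Y+Y^2=Y^2+3Y\le Y^3$, and $|\alpha(\alpha-\beta)|\le\sqrt{Y}\sqrt{Y^2+3Y}=\sqrt{Y^3+3Y^2}\le Y^3$, both final inequalities following at once from $Y\ge 3$. For the numerators, the triangle inequality gives the bounds $Y^3+2Y^2+Y$ for that of $a$, $Y^3+2Y^2+Y$ for that of $b$, and $3Y^3+2Y^2+Y$ for that of $c$, each of which is $\le 4Y^3$ because $2Y^2+Y\le Y^3$ for $Y\ge 3$. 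Since reducing a fraction increases neither the absolute value of its numerator nor its denominator, $a,b,c$ have reduced numerators at most $4Y^3$ and reduced denominators at most $Y^3$; as the (naive) logarithmic height of a rational $p/q$ in lowest terms is $\log\max\{|p|,q\}$, this yields $\max\{h(a),h(b),h(c)\}\le\log(4Y^3)$.

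The one point that needs care is keeping the constants sharp enough that the denominators really come out $\le Y^3$ and not merely $O(Y^3)$: the crude estimate $|\alpha-\beta|\le|\alpha|+|\beta|\le 2Y$ would only give $(\alpha-\beta)^2\le 4Y^2$, which is not $\le Y^3$ at $Y=3$. So one genuinely uses that $\alpha^2\mid r_3$ (hence $|\alpha|\le\sqrt Y$) and the identity $2\alpha\beta=-r_2-\alpha^2$ (hence $|2\alpha\beta|\le 2Y$), together with the hypothesis $Y\ge 3$ established just before the lemma; the rest is routine bookkeeping.
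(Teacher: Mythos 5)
Your proof is correct and follows the same overall route as the paper: the Viete relation $\alpha^2\beta=r_3$ to get $|\alpha|\le\sqrt{Y}$ and $|\beta|\le Y$, then Cramer's rule on the system coming from $u_0,u_1,u_2$. The one genuine difference is in the bookkeeping: you cancel the common factors in the Cramer quotients and, crucially, use the second Viete relation $2\alpha\beta=-r_2-\alpha^2$ to get $|2\alpha\beta|\le 2Y$ and hence the uniform bound $(\alpha-\beta)^2\le Y^2+3Y\le Y^3$, whereas the paper bounds $|\Delta|=|\alpha|(\alpha-\beta)^2$ by splitting into the cases $|\alpha|>1$ and $|\alpha|=1$ (using $|r_2|=|2\beta\pm1|$ in the latter) and invokes Hadamard's inequality for one of the $\Delta_i$. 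Your variant is a mild but real simplification, and your closing remark correctly identifies why the crude estimate $|\alpha-\beta|\le 2Y$ would not suffice at $Y=3$.
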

	\begin{proof}
	By the Viete relation, we have $\alpha^2\beta=r_3$. Hence, $|\alpha|^2|\beta|\le Y$, which implies that $|\alpha|\le Y^{1/2}$ and $|\beta|\le Y/|\alpha|^2$. As for $a,b,c$, we solve the linear system
	\begin{eqnarray*}
	c+b & = & u_0;\\
	a\alpha+c\alpha+b\beta & = & u_1;\\
        2a\alpha^2+c\alpha^2+b\beta^2 & = & u_2.
        \end{eqnarray*}
        We solve it with Cram\'er's rule getting that $a,b,c$ are of the form $\Delta_i/\Delta$, where 
        $$
        \Delta=\left| \begin{matrix} 0 & 1 & 1 \\
        \alpha & \alpha & \beta\\
        2\alpha^2 & \alpha^2 & \beta^2
        \end{matrix}
        \right|,
        $$
        $$
        |\Delta|=|\alpha||\alpha^2-2\alpha\beta+\beta^2|\le 4\max\{|\alpha|^3,|\alpha||\beta|^2\}\le \max\{4Y^{3/2},4Y^2/|\alpha|\}.
        $$ 
        Since $Y\ge 3$, the expression on the right above is always $\le Y^3$ except in the case when $|\beta|>|\alpha|=1$. In this last case, we have 
        $|r_2|=|2\beta\pm 1|$, so $|\beta|\le (Y+1)/2$, therefore 
        $$
        |\Delta|=(\beta\pm 1)^2<4\beta^2\le 4\left(\frac{Y+1}{2}\right)^2=(Y+1)^2<Y^3.
        $$
        Thus, the inequality $|\Delta|\le Y^3$ always holds. 
         $\Delta_i$ are $3\times 3$ determinants obtained replacing some column in $\Delta$ by $(u_0,u_1,u_2)^T$; i.e.,  
         $$
         \Delta_1=\left| \begin{matrix} u_0 & 1 & 1 \\
         u_1 & \alpha & \beta\\
         u_2 & \alpha^2 & \beta^2
         \end{matrix}
         \right| = u_0(\alpha\beta^2-\alpha^2\beta)-(u_1\beta^2-u_2\beta) + \alpha^2 u_1- u_2 \alpha,
         $$
         $$
         \Delta_2=\left| \begin{matrix} 0 & u_0 & 1 \\
         \alpha & u_1 & \beta\\
         2\alpha^2 & u_2 & \beta^2
         \end{matrix}
         \right| = -u_0(\alpha\beta^2-2\alpha^2 \beta)+ \alpha u_2-2\alpha^2 u_1
         $$ and 
         $$
         \Delta_3=\left| \begin{matrix} 0 & 1 & u_0 \\
         \alpha & \alpha & u_1\\
         2\alpha^2 & \alpha^2 & u_2
         \end{matrix}
         \right| = -(\alpha u_2- 2\alpha^2u_1) - u_0\alpha^3.
         $$
         Using the fact that $\vert \alpha^2\beta \vert  \leq Y$, we deduce  $|\Delta_i|\leq 4Y^3$. Let us make these deductions. We have 
         $$
         |\Delta_1|\le 6Y\max\{|\alpha\beta^2|,|\alpha^2\beta|\}.
         $$
        If the maximum is in $|\alpha^2\beta|$, we then get $|\Delta_1|\le 6Y^2<4Y^3$. Otherwise, 
        $$
        |\Delta_1|\le 6Y|\alpha \beta^2|\le 6Y|\alpha|(Y/|\alpha|)^2=6Y^3|\alpha|^{-1}<4Y^3\quad {\text{\rm if}}\quad |\alpha|>1.
        $$ 
        Finally, if $\alpha=\pm 1$, then by the Hadamard inequality 
        $$
        |\Delta_1|\le ({\sqrt{3}}Y)\times {\sqrt{3}} \times {\sqrt{1+\beta^2+\beta^4}}\le 3Y\beta^2\left(1+\frac{1}{4}+\frac{1}{16}\right)^{1/2}<4Y^3,
        $$
        where the last inequality holds because $3(1+1/4+1/16)^{1/2}<4$. The argument is similar for $|\Delta_2|$. Namely,
        $$
        |\Delta_2|\le 6Y\max\{|\alpha \beta^2|,|\beta\alpha^2|\}.
        $$
        As in the previous analysis, the expression in the right above is at most $4Y^3$ except possibly if $\alpha=\pm 1$ in which case by analysing the expression for $\Delta_2$ directly we get
        $$
        |\Delta_2|\le Y(|\beta|^2+2|\beta|+3)=Y\beta^2\left(1+\frac{2}{|\beta|}+\frac{3}{|\beta|^2}\right)\le Y^3\left(1+\frac{2}{2}+\frac{3}{2^2}\right)<4Y^3.
        $$
        Finally, 
        $$
        |\Delta_3|\le 6Y\alpha^3\le 6Y(Y^{1/2})^3=6Y^{5/2}<4Y^3
        $$
       since $Y\ge 3$. Hence, $\max\{h(a),h(b),h(c)\}\le \log(4Y^3)$.  
	\end{proof}
	The following parallels Lemma 9 in \cite{GuLu}. 
	\begin{lem}
	\label{lem:zero}
	If $u_n=0$, then $n<39Y\log Y$. 
	\end{lem}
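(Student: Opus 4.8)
The plan is to extract everything from the closed form $u_n=(an+c)\alpha^n+b\beta^n$ together with the size estimates of Lemma~\ref{lem:heights}; no transcendence tool is needed, and the argument runs parallel to Lemma~9 in \cite{GuLu}. We may assume the recurrence genuinely has order three, i.e. $b\neq 0$ — otherwise $u_n=(an+c)\alpha^n$ vanishes only at $n=-c/a$, which is bounded well below $39Y\log Y$ directly from $c=u_0$, $a\alpha=u_1-u_0\alpha\in\mathbb Z\setminus\{0\}$ and $|\alpha|^2\le|\alpha|^2|\beta|=|r_3|\le Y$. We may also assume $n\ge 1$, since $u_0=0$ is trivially admissible. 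So suppose $u_n=0$ with $n\ge 1$. Then $(an+c)\alpha^n=-b\beta^n$, and in particular $an+c\neq 0$ because $b\beta^n\neq 0$. Writing $an+c=(\Delta_1 n+\Delta_3)/\Delta$ as in the proof of Lemma~\ref{lem:heights}, the numerator is a nonzero integer and $|\Delta|\le Y^3$, so $|an+c|\ge Y^{-3}$; on the other hand $|an+c|\le|a|n+|c|\le 4Y^3(n+1)\le 8Y^3n$. Likewise $Y^{-3}\le|b|\le 4Y^3$, while $|\alpha|^2|\beta|=|r_3|\le Y$ with $|\beta|\ge 1$ gives $|\alpha|\le Y^{1/2}$ and then $|\beta|\le Y$. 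Taking absolute values in $(an+c)\alpha^n=-b\beta^n$ produces the key identity $|an+c|=|b|\,(|\beta|/|\alpha|)^n$, and since the sequence is nondegenerate we have $\alpha\neq\pm\beta$, hence $|\alpha|\neq|\beta|$: exactly one of the two cases below occurs.

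If $|\alpha|>|\beta|$, then $|\beta|\le|\alpha|-1$, so $|\beta|/|\alpha|\le 1-|\alpha|^{-1}\le 1-Y^{-1/2}<1$. Feeding $|an+c|\ge Y^{-3}$ and $|b|\le 4Y^3$ into the key identity gives $(1-Y^{-1/2})^n\ge(4Y^6)^{-1}$, whence, using $-\log(1-t)\ge t$ for $0<t<1$, $nY^{-1/2}\le\log(4Y^6)$. Thus $n\le Y^{1/2}(\log 4+6\log Y)$, which is below $39Y\log Y$ for every $Y\ge 3$.

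If instead $|\beta|>|\alpha|$, then $|\beta|/|\alpha|\ge 1+|\alpha|^{-1}\ge 1+Y^{-1/2}$, so $\log(|\beta|/|\alpha|)\ge\log(1+Y^{-1/2})>\tfrac12 Y^{-1/2}$. Feeding $|an+c|\le 8Y^3n$ and $|b|\ge Y^{-3}$ into the key identity gives $(|\beta|/|\alpha|)^n\le 8Y^6 n$, which with $x:=(|\beta|/|\alpha|)^n$ reads $x/\log x\le 8Y^6/\log(|\beta|/|\alpha|)\le 16Y^{13/2}$. Lemma~\ref{3} (with $s=1$ and $T=16Y^{13/2}>4$) then gives $x<32Y^{13/2}\log(16Y^{13/2})$, and dividing by $\log(|\beta|/|\alpha|)$ and estimating crudely yields $n<2Y^{1/2}\log\!\bigl(32Y^{13/2}\log(16Y^{13/2})\bigr)<39Y\log Y$ for $Y\ge 3$. (One may instead bypass Lemma~\ref{3}: the inequality $\tfrac12 nY^{-1/2}\le\log(8Y^6n)$ already forces $n<39Y\log Y$, because the map $t\mapsto\tfrac12 tY^{-1/2}-\log t$ is increasing for $t>2Y^{1/2}$ and exceeds $\log(8Y^6)$ at $t=39Y\log Y$ when $Y\ge 3$.)

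The argument is almost entirely bookkeeping. The one point requiring care is the second case, where the linear factor $an+c$ contributes a parasitic $\log n$ on the right-hand side that must be absorbed — via Lemma~\ref{3} or the monotonicity of $t\mapsto t/\log t$ — and where one must check that the absolute constant that finally emerges is genuinely below $39$; the degenerate configurations $b=0$, $an+c=0$ and $n=0$ are each disposed of immediately.
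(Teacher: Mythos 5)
Your proof is correct and follows essentially the same route as the paper's: the identity $(|\beta|/|\alpha|)^n=|p(n)|/|b|$, the case split on which root is larger, the numerator/denominator bounds from Lemma~\ref{lem:heights}, and Lemma~\ref{3} to absorb the stray $\log n$ when $|\beta|>|\alpha|$ (the paper runs the argument by contradiction and uses the sharper bound $|\alpha|<Y^{1/3}$ in that case, but these differences are cosmetic and your constants check out). One small caveat: your side remark that the $b=0$ case is ``bounded well below $39Y\log Y$'' via $|n|=|c/a|\le |u_0||\alpha|\le Y^{3/2}$ actually fails once $Y\gtrsim 2.3\times 10^{5}$ (where $Y^{3/2}$ overtakes $39Y\log Y$), though this is harmless here since $b\ne 0$ is forced by the sequence being genuinely of order three --- the paper divides by $b$ without comment.
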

	\begin{proof}
	We assume $n\ge 39Y\log Y$ in order to get a  contradiction. The  relation $u_n=0$ implies
	\begin{equation}
	\label{eq:1}
	\left(\frac{|\beta|}{|\alpha|}\right)^n=\frac{|p(n)|}{|b|}.
	\end{equation}
	Assume $|\beta|>|\alpha|$. Then $|\alpha|<Y^{1/3}$. In the right--hand side above, the fraction $|p(n)|/|b|$ is, by Lemma \ref{lem:heights}, at most 
	\begin{equation}
	\label{eq:2}
	\frac{4Y^3(n+1)}{1/Y^3}\le 4Y^6(n+1)<n^7.
	\end{equation}
	For the last inequality above, we used that $n>39 Y\log Y$ and $Y\ge 3$. Taking logarithms, we get
	$$
	n\log\left(\frac{ |\beta|}{|\alpha|}\right)<7\log n.
	$$
	In the left, 
	$$\log\left(\frac{|\beta|}{|\alpha|}\right)\ge \log\left(1+\frac{1}{|\alpha|}\right)>\log\left(1+\frac{1}{Y^{1/3}}\right)>\frac{1}{Y^{1/3}+1}>\frac{1}{Y},
	$$
	so we get $n<7Y\log n$. By Lemma \ref{3} with $s=1$, we get 
	$$
	n<14Y(\log7+\log Y)\le 14Y(\log Y)\left(\frac{\log 7}{\log 3}+1\right)<39Y\log Y, 
	$$
	a contradiction. If $|\beta|<|\alpha|$, we then get
	$$
	\left(\frac{|\alpha|}{|\beta|}\right)^n=\frac{|b|}{|p(n)|}.
	$$
	The numerator in the right--hand side above is a rational number of denominator  a divisor of $|\Delta|\le Y^3$, so $|b|\le 4Y^3$ and $|p(n)|\ge 1/Y^3$. Thus, the right--hand side above is at most $4Y^6$. We thus get
	\begin{equation}
	\label{eq:3}
	n\log\left(\frac{|\alpha|}{|\beta|}\right)<\log(4Y^6).
	\end{equation}
	In the left--hand side, since now $|\beta|< Y^{1/3}$, we have
	$$
	\log\left(\frac{|\alpha|}{|\beta|}\right)\ge \log\left(1+\frac{1}{|\beta|}\right)> \frac{1}{|\beta|+1}> \frac{1}{Y^{1/3}+1}> \frac{1}{Y}.
	$$
	Hence, we get
	\begin{equation}
	\label{eq:4}
	n<Y\log(4Y^6)<Y\log(Y^8)=8Y\log Y<39Y\log Y.
	\end{equation}
	\end{proof}
	We need a lower bound on $u_n$. 
	\begin{lem}
	\label{lem:lowerboundun}
	Assume $n>Y^{8}$. If $|\beta|>|\alpha|$,  then 
	$$
	|u_n|>\frac{|\beta|^n}{2Y^3}.
	$$
	Otherwise, that is if $|\alpha|>|\beta|$, then 
	$$
	|u_n|>\frac{n|\alpha|^n}{6Y^3}.
	$$
	\end{lem}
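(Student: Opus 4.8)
The plan is to split on which of $|\alpha|,|\beta|$ is larger, isolate the dominant term of $u_n=p(n)\alpha^n+b\beta^n$ by the triangle inequality, bound the subdominant term via the size estimates of Lemma~\ref{lem:heights}, and use the hypothesis $n>Y^8$ to make the ratio of growth rates, raised to the power $n$, dominate every polynomial-in-$n$ and polynomial-in-$Y$ error term. From Lemma~\ref{lem:heights} and its proof I will use freely that $|\alpha|\le Y^{1/2}$, that $\max\{|a|,|b|,|c|\}\le 4Y^3$, and that $a,b,c$ have denominator dividing $\Delta$ with $|\Delta|\le Y^3$, so that $|a|\ge 1/Y^3$ and $|b|\ge 1/Y^3$ (here $a\ne 0$ by the setup, and $b\ne 0$ as well, e.g. $b=\pm1$ in the Cullen/Woodall case). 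As in the proof of Lemma~\ref{lem:zero}, the Viète relation $\alpha^2\beta=r_3$ together with $Y\ge 3$ gives $|\alpha|<Y^{1/3}$ when $|\beta|>|\alpha|$, and $|\beta|<Y^{1/3}$ when $|\alpha|>|\beta|$.

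Suppose first $|\beta|>|\alpha|$. Then
$$|u_n|\ \ge\ |b|\,|\beta|^n-|p(n)|\,|\alpha|^n\ \ge\ \frac{|\beta|^n}{Y^3}-4Y^3(n+1)|\alpha|^n,$$
so it is enough to prove $4Y^3(n+1)|\alpha|^n<|\beta|^n/(2Y^3)$, equivalently $8Y^6(n+1)<(|\beta|/|\alpha|)^n$. Taking logarithms and using $\log(|\beta|/|\alpha|)\ge\log(1+1/|\alpha|)>1/(Y^{1/3}+1)>1/Y$, it suffices to verify $n/Y>\log(8Y^6(n+1))$; since $n>Y^8$ yields $Y\le n^{1/8}$, the left-hand side is at least $n^{7/8}$ and the right-hand side is at most $\log 16+(7/4)\log n$, and $n^{7/8}>\log 16+(7/4)\log n$ for every $n\ge 3^8$. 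This settles the first case.

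Now suppose $|\alpha|>|\beta|$, so the dominant term is $p(n)\alpha^n$. I first bound its coefficient: $|p(n)|=|an+c|\ge |a|n-|c|\ge n/Y^3-4Y^3>n/(2Y^3)$, the last step because $n>Y^8\ge 9Y^6>8Y^6$. Hence
$$|u_n|\ \ge\ |p(n)|\,|\alpha|^n-|b|\,|\beta|^n\ >\ \frac{n|\alpha|^n}{2Y^3}-4Y^3|\beta|^n,$$
and it remains to show $4Y^3|\beta|^n<n|\alpha|^n/(3Y^3)$, i.e. $12Y^6<n(|\alpha|/|\beta|)^n$. Since $|\beta|<Y^{1/3}$, Bernoulli's inequality gives $(|\alpha|/|\beta|)^n\ge(1+1/|\beta|)^n\ge 1+n/|\beta|>n/Y^{1/3}$, so $n(|\alpha|/|\beta|)^n>n^2/Y^{1/3}>Y^{16}/Y^{1/3}>12Y^6$ by $n>Y^8$ and $Y\ge 3$. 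Combining the two displays yields $|u_n|>n|\alpha|^n/(6Y^3)$, as claimed.

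The computations above are routine; the one place needing genuine care is uniformity — in the first case one must beat the polynomial term $8Y^6(n+1)$ by the exponential $(|\beta|/|\alpha|)^n$ for all $n>Y^8$ and all $Y\ge 3$ at once, not merely asymptotically. Passing to logarithms and feeding in $Y\le n^{1/8}$ reduces this to the single-variable inequality $n^{7/8}>\log 16+(7/4)\log n$ for $n\ge 3^8$, which is elementary (the function $n^{7/8}-(7/4)\log n-\log 16$ is increasing for $n\ge 3$ and positive at $n=3^8$).
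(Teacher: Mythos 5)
Your proof is correct and follows essentially the same route as the paper: isolate the dominant term of $u_n=p(n)\alpha^n+b\beta^n$ by the triangle inequality, bound $a,b,c$ via Lemma~\ref{lem:heights}, and use $\log(\max\{|\alpha|,|\beta|\}/\min\{|\alpha|,|\beta|\})>1/Y$ together with $n>Y^8$ to make the exponential ratio swamp the polynomial error. The only cosmetic difference is that the paper establishes the dominance inequalities by contradiction through the machinery of Lemma~\ref{lem:zero} (arriving at $n<39Y\log Y$), whereas you verify them directly with explicit elementary estimates; both arguments implicitly use $b\neq 0$, as does the statement of the lemma itself.
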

	\begin{proof}
	Suppose $|\beta|>|\alpha|$. We start by showing that 
	$$
	|b||\beta|^n>2|p(n)| |\alpha|^n
	$$
	If this is not so, then 
	$$
	\left(\frac{|\beta|}{|\alpha|}\right)^n\le \frac{2|p(n)|}{|b|}.
	$$
	We now follow the previous argument. The only difference is that in estimate \eqref{eq:1} the right--hand side is twice as large so it is bounded by $8Y^6$.  So, the bound of \eqref{eq:2} is now 
	$$
	\frac{8Y^3(n+1)}{1/Y^3}=8Y^6(n+1)<n^2,
	$$
	which holds since $n>Y^{8}>39Y\log Y$ for $Y>3$. In particular, estimate \eqref{eq:2} holds and we saw that it leads to $n<39Y\log Y$, a contradiction. Hence, 
	$$
	|u_n|=|p(n)\alpha^n+b\beta^n|\ge |b||\beta|^n-|p(n)||\alpha|^n\ge 0.5|b|\beta^n\ge \frac{|\beta|^n}{2Y^3}.
	$$
	Assume next that $|\alpha|>|\beta|$. Then, as in the previous argument, we show that 
	$$
	|p(n)| |\alpha|^n>2|b| |\beta|^n.
	$$
	Indeed, for if not, then 
	$$
	\left(\frac{|\alpha|}{|\beta|}\right)^n\le \frac{2|b|}{|p(n)|}.
	$$
	We follow again the previous argument. The argument of the logarithm in the right--hand side of \eqref{eq:3} is now $8Y^3$ so now the analogue of  \eqref{eq:4} becomes 
	$$
	n<Y\log(8Y^6)<Y\log (Y^{8})=8Y\log Y<39Y\log Y,
	$$
	again  a contradiction. In the above, we used that $Y^3>10$ since $Y\ge 3$.  Hence,
	\begin{equation}
	\label{eq:5}
	|u_n|=|p(n)\alpha^n+b\beta^n|> 0.5|p(n)|\alpha^n.
	\end{equation}
	In the right--hand side, we have
	$$
	|p(n)|=|an+c|=n|a|\left|1+\frac{c}{an}\right|.
	$$
	Since $c$ is at most $4Y^3$ and $|a|$ is at least $1/Y^3$, it follows that $|c|/|an|\le 4Y^6/n<2/3$ since $n>Y^{8}$ and $Y^2\ge 9>6$. Hence,
	$$
	|p(n)|> \frac{n|a|}{3}> \frac{n}{3Y^3},
	$$
	which combined with \eqref{eq:5} gives the desired lower bound on $|u_n|$. 
	\end{proof}
	The following parallels Lemma 10 in \cite{GuLu}.
  \begin{lem}
  \label{lem:muun}
  	Let $p$ be a prime number. If $n>X^8$, then 
	$$
  	\nu_p(u_n)\leq 1.2 \times 10^{12} \dfrac{p}{\log p}( \log p + \log Y) \log^2n.
  	 $$
  \end{lem}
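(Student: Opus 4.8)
The plan is to bound $\nu_p(u_n)$ by first peeling off, by elementary $p$-adic arithmetic, the primes dividing $\alpha\beta$ and the primes exceeding $n^2$, and then invoking Yu's estimate (Lemma~\ref{Y}) only in the remaining ``generic'' range $p\nmid\alpha\beta$, $p\le n^2$. Throughout I would use, from Lemma~\ref{lem:heights}, that $|\alpha|,|\beta|\le Y$ and $\max\{h(a),h(b),h(c)\}\le\log(4Y^3)$; that $u_n\ne 0$ by Lemma~\ref{lem:zero} (since $n>X^8\ge Y^8>39Y\log Y$); that $p(n)=an+c$ satisfies $p(n)\notin\{0,-b\}$ for every $n>X^8$, because each of $p(n)=0$ and $p(n)=-b$ forces $n$ to equal a fixed rational of absolute value $\le 8Y^6<X^8$; and that $n>X^8\ge Y^8$, so $Y<n^{1/8}$.

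For the easy ranges: if $p\mid\alpha$ then $p\nmid\beta$, and writing $B_0:=\log(4Y^3)/\log p$ we get $\nu_p(b\beta^n)=\nu_p(b)\le B_0$ while $\nu_p(p(n)\alpha^n)=\nu_p(p(n))+n\nu_p(\alpha)\ge n-B_0$; since $n>X^8>2B_0$, the second valuation strictly exceeds the first, so the ultrametric inequality gives $\nu_p(u_n)=\nu_p(b)\le B_0$, which is negligible against the claimed bound. If $p\mid\beta$ the picture is mirror-symmetric: now $\nu_p(b\beta^n)\ge n-B_0$ strictly exceeds $\nu_p(p(n)\alpha^n)=\nu_p(p(n))\le\log\bigl(4Y^3(n+1)\bigr)/\log p$, so $\nu_p(u_n)=\nu_p(p(n))$, again negligible. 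Finally, if $p\nmid\alpha\beta$ and $p>n^2$, then $p^{\nu_p(u_n)}\le|u_n|\le 4Y^3(n+1)|\alpha|^n+|b|\,|\beta|^n<8nY^{n+3}$, whence $\nu_p(u_n)<\log(8nY^{n+3})/(2\log n)<n<n^2<p\le 1.2\times10^{12}\tfrac{p}{\log p}(\log p+\log Y)\log^2n$.

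The substance is the range $p\nmid\alpha\beta$, $p\le n^2$. Here I would put
\[
\Lambda:=\Bigl(\tfrac{-p(n)}{b}\Bigr)\Bigl(\tfrac{\alpha}{\beta}\Bigr)^{n}-1=\frac{-u_n}{\,b\beta^n\,},
\]
so $\Lambda\ne0$ (as $u_n\ne0$) and the bases $\eta_1:=-p(n)/b$, $\eta_2:=\alpha/\beta$ are neither $0$ nor $1$ (using $p(n)\notin\{0,-b\}$ and $\alpha/\beta\ne\pm1$); moreover $\nu_p(\Lambda)=\nu_p(u_n)-\nu_p(b)$ since $p\nmid\beta$, hence $\nu_p(u_n)\le\nu_p(\Lambda)+\log(4Y^3)/\log p$. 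Then I would apply Lemma~\ref{Y} with $\mathbb K=\mathbb Q$ (so $D=1$), $l=2$, $d_1=1$, $d_2=n$, $\pi=(p)$ (so $e_\pi=f_\pi=1$) and $B^*=n$. From Lemma~\ref{lem:heights}, $h(\eta_2)=\log\max\{|\alpha|,|\beta|\}\le\log Y$, and, via $h(p(n))\le\log\bigl(4Y^3(n+1)\bigr)$, $h(\eta_1)\le\log\bigl(16Y^6(n+1)\bigr)<2\log n$ (the last step using $Y<n^{1/8}$ and $n>X^8\ge 11^8$). So one may take $H_1=\max\{2\log n,\log p\}=2\log n$ and $H_2=\max\{\log Y,\log p\}\le\log p+\log Y$, and with $\log B^*=\log n$ Lemma~\ref{Y} yields
\[
\nu_p(\Lambda)\le 19\,(20\sqrt3)^{6}\log(2e^5)\cdot\frac{p}{(\log p)^2}\cdot 2\log n\cdot(\log p+\log Y)\cdot\log n .
\]
Since $19(20\sqrt3)^6\log(2e^5)<2\times10^{11}$ and $\log p\ge\log 2$, the right-hand side is at most $6\times10^{11}\tfrac{p}{\log p}(\log p+\log Y)\log^2n$; adding the term $\log(4Y^3)/\log p$, which is absorbed because $(\log p+\log Y)\log^2n$ dwarfs $\log(4Y^3)$ once $n>X^8$, gives $\nu_p(u_n)\le 1.2\times10^{12}\tfrac{p}{\log p}(\log p+\log Y)\log^2n$, as claimed.

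The only genuine friction I anticipate is the bookkeeping that isolates this generic range: one must check with care that for $p\mid\alpha\beta$ a single summand of $u_n$ really does dominate $p$-adically as soon as $n>Y^8$ (so that the contribution is $O(\log Y+\log n)$ rather than linear in $n$), and that the trivial estimate $p^{\nu_p(u_n)}\le|u_n|$ suffices for $p>n^2$. After that, feeding the height bounds $h(\eta_1)<2\log n$, $h(\eta_2)\le\log Y$ and $\log B^*=\log n$ into Yu's inequality with $D=1$, $e_\pi=f_\pi=1$ automatically produces the shape $\tfrac{p}{\log p}(\log p+\log Y)\log^2 n$, and the stated constant $1.2\times10^{12}$ is comfortably larger than what the computation actually needs; the remaining arithmetic is routine.
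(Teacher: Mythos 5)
Your proof is correct and follows essentially the same route as the paper: isolate the primes dividing $\alpha$ (resp.\ $\beta$) by the ultrametric inequality, and for the remaining primes apply Yu's bound (Lemma~\ref{Y}) to the two-term form obtained by dividing $u_n$ by one of its summands, with heights controlled via Lemma~\ref{lem:heights}. The only cosmetic differences are that you normalize by $b\beta^n$ rather than $p(n)\alpha^n$ and peel off $p>n^2$ to take $H_1=2\log n$, whereas the paper keeps the $\log p$ term and absorbs it into the constant via $1+2.33/\log p\le 4.4$.
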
	
\begin{proof}
First of all note that $u_n\ne 0$ in the range $n>X^8$ by Lemma \ref{lem:zero}.
 Assume that $p$ does not divide $\alpha$. Then 
 $$\nu_p(u_n)= \nu_p(p(n))+ \nu_p(1-(-b)\beta^np(n)^{-1}\alpha^{-n}).$$  Since
 $$
 p^{\nu_p(p(n))}\leq |{\text{\rm numerator}}(an + b)|\leq 4Y^3(n+1),
 $$ 
 we deduce that $$\nu_p(p(n))\leq  \dfrac{\log (4Y^3) + \log (n+1)}{\log p}.$$ On the other hand, we apply Lemma \ref{Y} with the following parameters: $$\Lambda:=\vert 1+b\beta^np(n)^{-1}\alpha^{-n} \vert
 $$
   $\eta_1:=(-b)^{-1}p(n),~~\eta_2:=\beta\alpha^{-1},~~d_1:=-1,~~d_2:=n.$ Further, 
  $$
  h(p(n)b^{-1})\leq h(p(n))+h(b)\leq \log (16Y^6(n+1))
  $$ and
  $$
  h(\beta \alpha^
  {-1})\leq h(\alpha)+ h(\beta)\leq \log Y.
  $$ 
  Applying Lemma \ref{Y}, we get
  \begin{eqnarray*}
  \nu_p(\vert \Lambda \vert) &\leq & 19(20\sqrt{3})^6\cdot \dfrac{p}{(\log p)^2}\log(2e^5)(\log p + \log Y)(\log p+\log (16Y^6(n+1))) \log n\\ 
  &\leq & 19(20\sqrt{3})^6\cdot \dfrac{p}{(\log p)^2}\log(2e^5)(\log p + \log Y)(\log p+2.33\log n) \log n\\
  &\leq & 19(20\sqrt{3})^6\cdot \dfrac{p}{\log p}\log(2e^5)(\log p + \log Y)\left(1+\frac{2.33}{\log p}\right)(\log n)^2\\
  & \leq & 19(20{\sqrt{3}})^6\cdot 4.4 \log(2e^5) \dfrac{p}{\log p}(\log p + \log Y)(\log n)^2\\
  &\leq & 1.1 \times 10^{12}\cdot \dfrac{p}{\log p}(\log p + \log Y)(\log n)^2.
 \end{eqnarray*}
 The only fact to justify in the above calculation  is that $2.33\log n>\log(16Y^6(n+1)$, but this is so since 
 $$
 \frac{n^{2.33}}{n+1}>(n-1) n^{0.33}\ge (X^8-1)(X^{8\cdot 0.33})>(X-1)(X^7+\cdots+1) X^2>(2\cdot 3^2)X^7>16Y^6,
 $$
 since $X\ge Y\ge 3$. Hence, 
 $$ \nu_p(u_n) \leq \nu_p(p(n))+\nu_p(\Lambda)< 1.2 \times 10^{12}\cdot \dfrac{p}{\log p}(\log p + \log Y)(\log n)^2.
 $$
If $p$ divides $\alpha$, then 
$$
\nu_p(u_n)\le \min\{\nu_p(p(n)\alpha^n),\nu_p(b\beta^n)\}=\min\{\nu_p(p(n)\alpha^n),\nu_p(b)\}\le \log(4Y^3)/\log p,
$$
a much better inequality.  
\end{proof}

\section{Proof of Theorems \ref{thm:1} and \ref{thm:2}}\label{sec5}
In this section, we  prove Theorems  \ref{thm:1} and \ref{thm:2} simultaneously. Let $\pi (X)$ be the number of primes $p\leq X.$ By the prime number theorem, we have $$\pi (X)= (1+o(1))\dfrac{X}{\log X }
$$ as $X\rightarrow \infty$. To prove these theorems, it suffices to show that 
\begin{equation*}
n \leq  \left\{
\begin{aligned}
e^{12X}  & \quad \mbox{for all}\quad X  \\
M(X)^{(1+o(1))}  & \quad \mbox{when}\quad X \rightarrow \infty,
\end{aligned}
\right.
\end{equation*}
where $$M(X)=e^{\pi (X)\log \log X}.
$$
Since $X\geq 11,$ we may assume that $n\geq X^8$, otherwise we directely have the desired result. 

\medskip

\textbf{Case 1}: $A=0$.  In this situation, equation \eqref{eq:(5)} becomes $u_n=Bs,$ with $s=p_1^{\theta_1}p_2^{\theta_2}\cdots p_k^{\theta_k}$. Using the fact that 
$\theta_i\leq \nu_{p_i}(u_n)$, we deduce via Lemma \ref{lem:muun}, 
\begin{eqnarray}
\label{eq:theta}
\theta_i &\leq & 1.2 \times 10^{12}\cdot \dfrac{p_i}{\log p_i}(\log p_i + \log Y)(\log n)^2\nonumber\\ 
&\leq & 1.2 \times 10^{12}\cdot \dfrac{X}{\log X}(\log X + 2\log X)(\log n)^2\nonumber\\
&\leq & 3.6 \times 10^{12} X(\log n)^2, 
\end{eqnarray}
where, for the second inequality, we used the fact that the function $x\mapsto x/\log x$ is increasing for all $x>e$ and $X\ge 11$.  We have 
\begin{eqnarray}
\label{eq:10}
\log \vert u_n \vert &=& \log B + \sum_{i=1}^{k} \theta_i \log p_i\nonumber\\ 
&\leq& \log X + 3.6\pi (X)\cdot 10^{12} X\log X (\log n)^2\nonumber\\
&\leq& 1.3\times 3.6 \cdot 10^{12} X^2(\log n)^2\nonumber\\
&\leq& 5 \times 10^{12} X^2(\log n)^2,
\end{eqnarray}
where, for the second inequality, we used the fact that $\pi(X)\leq 1.25 X/ \log X$ (see Corollary 2 in \cite{8}). When $\vert \beta \vert > \vert \alpha \vert$, by Lemma \ref{lem:lowerboundun}, we have  
$$
n \log \vert \beta \vert - \log(2Y^3) \leq \log \vert u_n \vert,
$$
which combined with \eqref{eq:10} and the fact that $|\beta|\ge 2$ implies 
\begin{equation}
\label{eq:7.3}
n\leq 7.3 \times 10^{12} X^2 (\log n)^2.
\end{equation}
On the other hand, if $\vert \alpha \vert > \vert \beta \vert$, then 
using again Lemma \ref{lem:lowerboundun},  we get 
$$ 
\log n + n \log \vert \alpha \vert - \log (6Y^3)\leq 
5\cdot 10^{12}X^2(\log n)^2,
$$
which also implies \eqref{eq:7.3}.
Applying Lemma \ref{3} with $s=2$ and $T:=7.3\times 10^{12} X^2$ to \eqref{eq:7.3}, we get 
\begin{align*}
n &\leq 4 \times 7.3 \times 10^{12} X^2(\log (7.3 \times  10^{12} X^2))^2\\ 
&= 29.2 \times 10^{12} X^2 (\log (7.3 \times  10^{12})+ 2\log X)^2\\
&\leq  29.2 \times 10^{12} X^2 (\log X)^2\left( \dfrac{\log (7.3 \times 10^{12})}{\log 11}+2 \right) ^2\\
&\leq 6.1\times  10^{15}X^2(\log X)^2.
\end{align*}
The last expression above is less than $e^{12X}$ as $X\geq 11$ and it is certainly $ M(X)^{o(1)}$ as $X\rightarrow \infty$.

\medskip
 
 \textbf{Case 2}:  $B=0$. Our equation becomes $u_n=Am!$. This implies that 
 $$
 \log \vert u_n \vert \le  \log X + m \log m.
 $$ 
 If  $\vert \beta \vert > \vert \alpha \vert$, then by Lemma \ref{lem:lowerboundun}, we have $ n \log \vert \beta \vert - \log (2Y^3)\leq \log \vert u_n \vert $ which gives 
 $$  n \log \vert \beta \vert \leq \log 2 + 3 \log X + \log X + m\log m
 $$
 so 
 \begin{equation}
 \label{1'}
 n \log 2 \leq  n \log \vert \beta \vert \leq 5 \log X + m \log m. 
 \end{equation}
 If $\vert \alpha \vert > \vert \beta \vert$, then again by Lemma \ref{lem:lowerboundun}, we have $\log n + n\log \vert \alpha \vert - \log (6Y^3) \leq \log \vert u_n \vert.$ Again since $\log \vert u_n \vert \leq \log X + m\log m,$ 
 it follows that 
$$ 
n \log 2 \leq \log 6 + 3 \log Y + \log X + m\log m<5\log X+m\log m.
$$
So, relation \eqref{1'} holds independently of which of $|\alpha|$ or $|\beta|$ is larger.
 Since $n> X^8,$ it follows that $0.001n> \log X^5$, so \eqref{1'} gives $m\log m>0.69 n$. Hence, 
 $$ 
 m > \dfrac{0.59n}{\log (0.59n)}> \dfrac{0.59n}{\log n}.
 $$ 
 If $m\leq 10$, then $n/ \log n \leq 17$ which implies $n\leq 73<e^{11X}$, a very good bound. If $m>10$, we have 
 $$ \nu_2(m!)= \left\lfloor \frac{m}{2} \right\rfloor + \left\lfloor \frac{m}{4}\right\rfloor + \cdots  >  \frac{m}{2} \geq \frac{0.2n}{ \log n}.
 $$
Since certainly $ \nu_2(m!) \leq \nu_2(u_n)$, we get, by estimate \eqref{eq:theta}, 
$$
\frac{0.2 n}{\log n}<3.6\times 10^{12} X(\log n)^2,
$$
which implies 
$$ n \leq 10^{14}  X (\log n)^3.
$$ Applying Lemma \ref{3} with $s=3,$ one obtains:
\begin{eqnarray*}
n &\leq & 8 \times 10^{14}  X (\log(10^{14}  X))^3\\ 
&\leq & 8 \times 10^{14}  X (\log (10^{14})+ \log X)^3\\
&\leq  & 8 \cdot 10^{14} X(\log X)^3\left( \frac{\log (10^{14})}{\log 11}+1 \right) ^3\\
&\leq & 2,5 \cdot 10^{18}X(\log X)^3,
\end{eqnarray*}
The last expression above is less than $e^{12X}$  since $X\geq 11$ and is certainly $ M(X)^{o(1)}$  when $X\rightarrow \infty$.

\medskip

\textbf{Case 3}: {$AB \neq 0$}. We have $u_n=Am!\pm Bs$, which implies 
$$ 
p(n)\alpha^n + b\beta^n= Am!\pm Bs.
$$ 
Put $ \gamma  = \max \{\vert \alpha \vert , \vert \beta \vert  \}.$ If $m! <  \gamma  ^{n/2}$ and $\vert \beta \vert $ realizes the maximum ($ \gamma $= $\vert \beta \vert$),  then one has:
\begin{align*}
\vert 1 \mp Bsb^{-1}\beta^{-n} \vert & \leq \left| \dfrac{p(n)}{b } \right|  \cdot \left| \dfrac{\alpha}{\beta} \right| ^n  +  \left| \dfrac{A}{b } \right|  \cdot \left| \dfrac{1}{\sqrt{|\beta|}} \right| ^{n}.
\end{align*}
We know that  $b^{-1}\leq \vert \Delta \vert \leq Y^3 $ and $\vert p(n) \vert \leq 4Y^3(n+1)$. 
However,
$$ \left| \dfrac{\alpha}{\beta} \right| ^n \leq \left(\dfrac{1}{1+1/(\vert \beta \vert -1)} \right) ^n \leq \left(\dfrac{1}{1+1/(Y -1)} \right) ^n \leq \left(\dfrac{1}{e^{1/Y}} \right) ^n,
$$ 
where we used the fact that $\vert \alpha \vert \neq \vert \beta \vert$, $\vert \beta \vert < Y$ and $(1+1/(x-1))^x > e$ for all $x>2$. Since $Y\geq 3$, we have
$$ 
e^{1/Y} \leq e^{1/3} \leq \sqrt{2} \leq \sqrt{ \vert \beta \vert}.
$$ 
Hence, we deduce that  
\begin{equation}
\label{a'}
\vert 1 \pm Bsb^{-1}\beta^{-n} \vert \leq \dfrac{4X^6(n+1)+X^4}{e^{n/Y}} \leq  \dfrac{4X^6(n+2)}{e^{n/Y}}\leq  \dfrac{X^7n}{e^{n/Y}},
\end{equation}
where, for the last inequality, we used the fact that $n+2\leq 1.5n$ and $X>6$.
We also know that  $$ \dfrac{p(n) \alpha^n+ b\beta^n-Am!}{B}= \pm s
$$ which implies
\begin{eqnarray}
\label{eq:gamma}
2^{\max \theta_i}&\leq &\left| \dfrac{p(n)}{B} \right|\cdot \vert \alpha \vert ^n + \left| \dfrac{b}{B} \right|\cdot \vert \beta \vert ^n + \left| \dfrac{A}{B} \right|\cdot (\sqrt{ \gamma })^n\nonumber \\ 
&\leq &4X^3(n+1) \vert \beta \vert ^n +4X^3\vert \beta \vert ^n + X\vert \beta \vert ^n\nonumber\\
&\leq &X^3n\vert \beta \vert ^n\nonumber\\
&\leq & n^2 \gamma ^n,
\end{eqnarray}
where, we used the fact that $ \gamma $= $\vert \beta \vert$ and for the last inequality, the fact that $X>11$ and $n\geq X^8$. Hence we deduce that 
\begin{equation}
\label{eq:max}
 \max \theta_i \leq \frac{1}{\log 2}(2\log n + n \log X) \leq 4n\log X \leq n^{3/2},
\end{equation}
where, for the last inequality, we used the fact that $n\geq X^8.$\\ 
 Now, we apply Matveev's Theorem with the following parameters: 
 \begin{equation}
 \label{eq:param}
 l=k+2,\quad \eta_1=Bb^{-1},~ \eta_2=\gamma,~\eta_{2+i}=p_i,~d_1=1,~d_2=-n,~d_{2+i}=\theta_i, \quad i=1,\ldots,k.
 \end{equation}
 By Lemma \ref{H}, one has 
 $$ 
 h(\eta_1)\leq h(B)+h(b)\leq  5\log X,~ h(\eta_2)\leq \log X,~h(\eta_{2+i})\leq \log X,\quad i=1,\ldots,k.
 $$  
 Put $\vert \Lambda\vert = \vert 1 \mp Bsb^{-1}\beta^{-n} \vert$. Note that $\Lambda\ne 0$ since we are working with non-degenerate solutions. Applying Lemma \ref{BMS}, we get 
\begin{eqnarray}
\label{eq:lowLambda}
\log \vert \Lambda \vert &\geq & -1.4 \cdot 30^{k+5}(k+2)^{4.5}\cdot 5\log X \cdot \log X \cdot (\log X)^k\cdot (1+(3/2)\log n) \nonumber \\ 
&\geq &  -11.2\cdot 30^{k+5}(k+2)^{4.5}(\log X)^{k+2} \log n,
\end{eqnarray}
where, for the last inequality, we used the fact that $n>X^{8}\ge 11^8$.  
Using the relation \eqref{a'}, we deduce that 
$$ 
n/Y-\log (X^7n) \leq 11.2\cdot 30^{k+5}(k+2)^{4.5}(\log X)^{k+2}\cdot \log n, 
$$ 
which implies 
\begin{equation}
\label{eq:5.1}
n \leq 11.3 \cdot 30^{k+5}(k+2)^{4.5}X(\log X)^{k+2}\cdot \log n. 
\end{equation} 
Applying Lemma \ref{3} to \eqref{eq:5.1} with $s=1$, one obtains 
\begin{align*}
n &\leq 2\cdot 11.3\cdot 30^{k+5}X^{5.5}(\log X)^{k+2}(\log ( 11.3\cdot 30^{k+5}X^{5.5}(\log X)^{k+2}))\\ 
&\leq  22.6\cdot 30^{k+5}X^{5.5}(\log X)^{k+3}(1+(3/2)(k+5)+5.5+k+2)\\
&\leq   22.6\cdot 30^{k+5}X^{5.5}(\log X)^{k+3}(16+(5/2)k)\\
&\leq 22.6\cdot X^{5.5}(30\log X)^{k+5}(16+(5/2)k),
\end{align*}
where, for the second inequality, we used the fact that $\log (11.3) < 1.02 \cdot \log X$,  $\log X<X$ and $\log 30\leq (3/2)\log X$.
Since $k\leq \pi(X) \leq 1.25X/ \log X< 0.53X$ for $X\ge 11$, it follows that 
\begin{equation}
\label{eq:6}
n\leq 2.2 \cdot 30 X^{6.5}(30\log X)^{k+5}<X^{6.5} (30\log X)^{k+6}.
\end{equation}
The last expression is at most $ M(X)^{(1+o(1))}$  when $X\rightarrow \infty$.  Notice that the logarithm of the right hand side of the above relation 
$$6.5 \log X + (k+6)\log (30\log X).
$$
Since $k<1.25X/\log X$, one proves easily that the right--hand side above is smaller than $8X$ for $X\ge 11$. 
So we have the desired result.\\

Let's assume now that  $\vert \alpha \vert $ realizes the maximum ($ \gamma $= $\vert \alpha \vert$) and $m! <  \gamma  ^{n/2}$.  We then have $p(n)\alpha^n \mp Bs = Am! -b\beta^n$, which implies that 
\begin{equation}
\label{b'}
\vert 1 \mp Bsp(n)^{-1}\alpha^{-n} \vert \leq \dfrac{X^4}{\vert \alpha \vert ^{n/2}}+ 4X^6 \cdot \left|  \frac{\beta}{\alpha}\right|^n,
\end{equation}
 where we used the fact that $\vert p(n) \vert ^{-1}<X^3$.
Further, we get $$ \left| \dfrac{\beta}{\alpha} \right| ^n \leq \left(\dfrac{1}{1+1/(\vert \alpha \vert -1)} \right) ^n \leq \left(\dfrac{1}{1+1/(Y -1)} \right) ^n \leq \left(\dfrac{1}{e^{1/Y}} \right) ^n.
$$  Since $Y\geq 3,$ we have
$$ 
e^{1/Y} \leq e^{1/3} \leq \sqrt{2} \leq \sqrt{ \vert \alpha \vert}.
$$ 
Hence, we deduce that  
\begin{equation}
\label{2''}
\vert 1 \mp Bsp(n)^{-1}\alpha^{-n} \vert \leq   \dfrac{5X^6}{e^{n/Y}}<\frac{X^7n}{e^{n/Y}},
\end{equation} 
which is the same as \eqref{a'}. We also have $$ \dfrac{p(n) \alpha^n+ b\beta^n-Am!}{B}= \pm s,
$$
which implies that 
\begin{align*}
2^{\max \theta_i}&\leq \left| \dfrac{p(n)}{B} \right|\cdot \vert \alpha \vert ^n + \left| \dfrac{b}{B} \right|\cdot \vert \beta \vert ^n + \left| \dfrac{A}{B} \right|\cdot (\sqrt{\vert \alpha \vert})^n \\ 
&\leq 4X^3(n+1) \vert \alpha \vert ^n +4X^3\vert \alpha \vert ^n + X\vert \alpha \vert ^n\\
&\leq n^2\vert \gamma \vert ^n,
\end{align*}
where we used the fact that $ \gamma =\vert \alpha \vert$ and for the last inequality the fact that $X>11$ and $n\geq X^8$.
The above inequality is the same as \eqref{eq:gamma}. Thus, \eqref{eq:max} also holds. 
Now, we apply Matveev's Theorem to the left--hand side of \eqref{2''} with the following parameters:
$l=k+2,~\eta_1=Bp(n)^{-1},~ \eta_2=\alpha,~\eta_{2+i}=p_i,~d_1=1,~d_2=-n,~d_{2+i}=\theta_i,~i=1,\ldots,k.
$
The fact that this is nonzero follows since we are working with nondegenerate solutions. 
This is the same as \eqref{eq:param} with the exception of $\eta_1$ for which 
$$
h(\eta_1)\le h(B)+h(p(n))\le \log X+\log(4X^3)+\log(n+1)<5\log X+\log(n+1)<2\log n.
$$
Thus, the same calculation as before shows that we have a bound as in \eqref{eq:lowLambda} except that $2\log X$ has been swamped by $\log n$. We leave the same power of $\log X$ in the right--hand side and just record that we have a lower bound as in \eqref{eq:lowLambda} with an additional $\log n$ factor in the right--had side: 
$$
\log \vert \Lambda \vert \geq -5\cdot 30^{k+5}(k+2)^{4.5}(\log X)^{k+2}(\log n)^2. 
$$
The previous calculation involving \eqref{2''} (which implies \eqref{a'}) leads to \eqref{eq:5.1} with an additional $\log n$ in the right--hand side:
$$ 
n \leq 5.1 \cdot 30^{k+5}(k+2)^{4.5}X(\log X)^{k+2}(\log n)^2. 
$$
Applying Lemma \ref{3} with $s=2$, one obtains 
\begin{eqnarray*}
n &\leq & 20.4 (30\log X)^{k+5}X(k+2)^{4.5}(\log 5.1+\log X + (k+5)\log(30\log X)+ 4.5 \log (k+2))^2\\ 
&\leq  & 20.4 (30)^{-2} (30\log X)^{k+7}X(k+2)^{4.5}\left( \frac{\log 5.1}{\log 11}+1+2(k+5)+4.5  \right)^2 \\
&\leq & 20.4 (30)^{-2} (30\log X)^{k+7}X(k+2)^{4.5}\left(16.2+2k  \right)^2 \\
&\leq &  20.4 (30)^{-2} (30\log X)^{k+7} X^{7.5} \left(0.53+2/11\right)^2\left(2\cdot 0.53+16.2/11\right)^2\\
& < & X^{7.5} (30\log X)^{k+7}.
\end{eqnarray*}
where we used the fact that $k\leq \pi(X)\leq 1.25X/\log X<0.53X$ (so $k+2<X$) and $30 \log X \leq X^2.$ 
The last bound resembles \eqref{eq:6} except that it has an extra factor of $X\log X$ on the right--hand side. 
The last expression is $ M(X)^{(1+o(1))}$  when $X\rightarrow \infty$.  Notice that the logarithm of the right hand side of the last inequality  
is less than $10X$. So we have the desired result.\\

Assume now that $m!> \gamma  ^{n/2}$. Then $m\log m>\log m!>(n/2)\log \vert \gamma \vert$. So, one obtains 
$$ 
m> \dfrac{n\log \vert \gamma \vert }{2\log((n/2)\log \vert \gamma \vert)}.
$$ 
If 
$$
\dfrac{n\log \vert \gamma \vert }{2\log((n/2)\log \vert \gamma \vert)}<2X,
$$ 
then by Lemma \ref{3} with $s=1$, $x:=(n\log \vert \gamma \vert)/2$, and $T=2X$, we get
$$
n\log 2\le n\log \gamma<8X\log 2X,
$$
so $n<16X\log(2X)$, which is a very good bound on $n$. 

Assume now that   
$$
\dfrac{n\log \vert \gamma \vert }{2\log((n/2)\log \vert \alpha \vert)}\geq 2X.
$$ 
Thus, 
\begin{eqnarray*} 
\nu_p(m!)= \left\lfloor \frac{m}{p} \right\rfloor + \left\lfloor \frac{m}{p^2} \right\rfloor + \cdots
>\frac{m}{2p}\geq \dfrac{n\log \vert \gamma \vert }{4p\log((n/2)\log \vert \gamma \vert)}.
\end{eqnarray*}
If for some $p\leq X$ we have $\nu_p(m!)\leq \nu_p(u_n)$, then 
$$ \dfrac{n\log \vert \gamma \vert }{4p\log((n/2)\log \vert \gamma \vert)}\leq 1.2 \cdot 10^{12} \dfrac{p}{\log p}( \log p + \log Y) \log^2n,
$$ where we used Lemma \ref{lem:muun}. So, 
$$ 
n\leq 1.5\cdot 10^{13} \dfrac{p^2}{\log p}( \log p + \log Y) \log^3n,
$$ 
where we used the fact that $|\gamma|\ge 2$ and $\log((n/2)\log \vert \gamma \vert)\leq 2 \log n$ as  ($\log \vert \gamma \vert /2 < X< n$). 
So,
$$
n\le 3\cdot 10^{13} X^2(\log n)^3. 
$$
Applying Lemma \ref{3} with $s=3$, one get 
\begin{align*}
n &\leq 8\cdot 3 \cdot 10^{13}\cdot X^2 (\log (3 \cdot 10^{13}\cdot X^2))^3\\ 
&\leq 24\cdot 10^{13}\cdot X^2 (\log (3\cdot 10^{13})+ 2 \log X)^3\\
&\leq 8.1 \cdot 10^{16}\cdot X^2 \log^3X.
\end{align*}
The logarithm of the right hand side of the above relation is less than $9X$ for $X\ge 11$. So, we have the desired result.\\

If for all $p\leq X$, one has $\nu_p(m!)>\nu_p(u_n)$, then $\nu_p(Bs)=\nu_p(u_n)$ for all $p\leq X$ which implies  $\nu_p(s)\leq \nu_p(u_n)$. However, 
\begin{eqnarray}
\label{eq:s}
\log s &\leq  &\sum_{i=1}^{k} \nu_{p_i}(s) \log p_i\nonumber\\
&\leq & \pi(X) \nu_{p_i}(u_n)\log X\nonumber\\
&\leq &1.25 (1.2\cdot 10^{12})( 2X^2) \log^2n\nonumber\\
&\leq &3 \cdot 10^{12}X^2\log^2n,
\end{eqnarray}
where, for the third inequality, we used the fact that $\pi(X)\leq 1.25X / \log X$ and Lemma \ref{lem:muun}.
If $\alpha\ne \pm 1$, let $p$ be a prime dividing it. 
Notice that we have 
$$
\alpha p(n) = -(\Delta_1n+ \Delta_3)/(\alpha-\beta)^2.
$$ 
Since $\gcd (\alpha, \beta)=1$, it follows that $p \nmid (\alpha-\beta)^2$ and so $\nu_p(\alpha p(n))\geq 0$. 
We then have
\begin{eqnarray*}
\nu_p(-b\beta^n\pm Bs) &=& \mu_p(\alpha^np(n) - Am!)\\  
&\geq & \min \{\nu_p(m!), \nu_p(\alpha^{n-1})\}\\
&\geq  & \dfrac{n\log \vert \alpha \vert, }{4p\log((n/2)\log \vert \alpha \vert)}\\
& \ge & \dfrac{n}{16X\log n}.
\end{eqnarray*}
For the last inequality we used $\log |\alpha|\ge \log 2>1/2$ and $\log((n\log |\alpha|)/2)<2\log n$. If $\alpha=\pm 1$, we keep $\alpha^np(n)$ on the same side of the equation with $Bs$ 
and let $p$ be a prime factor of $\beta$. Write
$$
\alpha^np(n)\pm Bs=-b\beta^n+Am!,
$$
and a similar calculation gives us that 
$$
\nu_p(\alpha^n p(n)\pm Bs)>\frac{n}{16X\log n}.
$$
Thus, if $p\mid \alpha$ then 
$$ 
\nu_p(\beta^{n}b\mp Bs) = \nu_p(b)+ \nu_p(1\mp\beta^{-n} Bsb^{-1} ),\quad  \nu_p(b) \leq \log(4Y^3)/\log p<8\log X.
$$
while if $\alpha=\pm 1$, then $p\mid \beta$ and 
$$
\nu_p(\alpha^n p(n)\pm Bs)=\nu_p(p(n))+\nu_p(1\mp Bs p(n)^{-1}\alpha^{-n}),\quad  \nu_p(p(n))\le \frac{\log(4Y^3(n+1))}{\log p}<4\log n.
$$
Put $\Lambda = \vert 1\mp\gamma^{-n} BsC^{-1}\vert$, where $C=b$ if $\gamma=\beta$ and $C=p(n)$ if $\gamma=\alpha$. 
Clearly, $\Lambda \neq 0$ because we are working with a non-degenerate solution. We apply Yu's Theorem with the following parameters: 
$$
\eta_1=BC^{-1},~\eta_2=s,~\eta_3=\gamma,~d_1=1,~d_2=1,~d_3=-n.
$$
The heights of the involved numbers are bounded as follows:
\begin{itemize}
	\item[(i)] $h(\eta_1)\leq \log B + h(C^{-1}) \leq \log X+4\log n\le 5\log n$ (as $n+1< n^{1.5}$ and $4Y^3 < n^{2.5}$);
	\item[(ii)] $h(\eta_2)= \log s  \leq  3 \cdot 10^{12}\cdot X^2\cdot \log^2 n.$
	\item[(iii)] $h(\eta_3)= \log  \vert \gamma \vert \leq \log X.$  
\end{itemize}
By Lemma \ref{Y}, we have
\begin{eqnarray*}
\nu_p(\Lambda) &\leq & 19(20\cdot 2)^8\frac{p}{(\log p)^2} \log(3e^5)  (5\log n)(3\cdot 10^{12} X^2(\log n)^2) \log X \log n \\  
&\leq & 1.3\cdot 10^{28} X^3 \log X \log^4n.
\end{eqnarray*}
The previous computation implies 
$$
\frac{n}{16X\log n} \leq 1.5\cdot 10^{28} X^3 \log X \log^4n
$$
which give us 
$$
n\leq 24\cdot 10^{28} X^4 \log X \log^5n.
$$
Applying Lemma \ref{3} with $s=5$, one get 
\begin{align*}
n &\leq 2^5\cdot 24\cdot 10^{28} X^4 \log X (\log (24\cdot 10^{28} X^4 \log X))^5\\ 
&\leq 7.7\cdot 10^{30}\cdot X^4 \log X (\log (24\cdot 10^{28})+ 2 X)^5\\
&\leq 2.8 \cdot 10^{35}\cdot X^9 \log X,
\end{align*}
where for the second inequality, we used the fact that  $\log (X^4) < X$.
The logarithm of the right hand side of the above relation is less than $12X$ for $X\ge 11$. This finishes the proof of the theorem.

\section{Proof of Theorem \ref{thm:3}} \label{sec6}
We assume that $u_n=C_n=n2^n+1$ or $u_n=W_n=n2^n-1$. One has 
$$
f(X)=(X-2)^2(X-1)=X^3-5X^2+8X-4,
$$
and $C_0=1,~C_1=3,~C_2=9,~W_0=-1,~W_1=1,~W_2=7,~r_1=5,~r_2=-8,~r_3=4$ so in the equation 
$$
n2^n\pm 1=\pm m!\pm s
$$ 
with $s$ a positive integer whose prime factors are in $\{2,3,5,7\}$, we have $X=11$. Hence, by Theorem \ref{thm:2}, we have $n<e^{12\times 11}<10^{58}$. 

\begin{lem}\label{FA}
	There is no solution with $m\ge 500$.
\end{lem}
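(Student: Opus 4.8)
\textbf{Proof proposal for Lemma \ref{FA}.} Suppose for contradiction that $m\ge500$. Since $A=0$ involves no factorial and $Am!=u_n$ is impossible ($u_n=n2^n\pm1$ is odd while $m!$ is even for $m\ge2$), I may write
\[
u_n=Am!+Bs,\qquad A,B\in\{\pm1\},\qquad u_n=n2^n+\delta,\ \delta\in\{1,-1\}.
\]
Because $2\mid m!$ while $u_n$ is odd, $s$ must be odd, so $s=3^b5^c7^d$. The only features of $m\ge500$ that will be used are $\nu_2(500!)=494$, $\nu_3(500!)=247$, $\nu_5(500!)=124$, $\nu_7(500!)=82$ and $\log(500!)>2600$, and the argument is arranged so that these numerical values suffice.

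First I would exploit the $2$-adic valuation. Writing $n2^n=Am!+(Bs-\delta)$ and comparing the three valuations $\nu_2(n2^n)\ge n$, $\nu_2(Am!)=\nu_2(m!)\ge494$ and $\nu_2(Bs-\delta)$, one obtains the dichotomy: either $\nu_2(m!)>n$ (hence $m>n$ and $2^n\mid Bs-\delta$), or $Bs\equiv\delta\pmod{2^{\min\{n,494\}}}$, whence $s=1$ or $s\ge2^{\min\{n,494\}}-1$. The case $s=1$ is ruled out by one further $2$-adic check, using that $m!=n2^n$ cannot hold: it would force $\prod_{7<p\le500}p\mid n$, and this product exceeds $e^{132}>n$. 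In particular, as soon as $n\ge494$ the integer $s$ is exponentially large: $\log s\ge494\log2-1$.

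Next I would exploit the $p$-adic valuations for $p\in\{3,5,7\}$ together with Lemma \ref{lem:muun}. Comparing $\nu_p$ on the two sides of $u_n=Am!+Bs$ shows that, for each such $p$, either $\nu_p(u_n)=\nu_p(m!)$, which gives $p^{\nu_p(500!)}\mid u_n$ and $\nu_p(m!)=\nu_p(u_n)\le C_p(\log n)^2$ (Lemma \ref{lem:muun}, with $C_p$ explicit), bounding $m$ polynomially in $\log n$; or $\nu_p(s)\le\nu_p(u_n)\le\log_p|u_n|\le n\log_p2+1$, which summed over $p$ gives $\log s\le3n\log2+\log105$. Combining this with the exponential lower bound on $s$ from the $2$-adic step — or, in the complementary branch, with $m!=|u_n-Bs|\le|u_n|+s$ and Stirling's bound $\log m!>m(\log m-1)$ — together with $n<e^{12X}=e^{132}$ from Theorem \ref{thm:2} and the divisibility $p^{\nu_p(500!)}\mid u_n\Rightarrow|u_n|\ge p^{\nu_p(500!)}$, forces each surviving branch into an inequality that fails for $m\ge500$. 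For small $n$ (say $n<494$) the trivial bound $\nu_p(u_n)\le\log_p|u_n|$ already falls below $\nu_p(500!)$, so $\nu_p(u_n)=\nu_p(m!)$ cannot occur there; one then gets $\log s\le\sum_p(\log_p|u_n|)\log p=O(n)$, contradicting $s\ge m!/2$ (Stirling) since $m!\gg|u_n|$ in that range.

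The main obstacle is the branch in which $n$ is already known to be large, so that $\min\{n,\nu_2(m!)\}$ is governed by $\nu_2(m!)$ and $m$ may a priori be as big as a constant multiple of $n$ (or of size $e^{C(\log n)^2}$). There the valuation estimates alone only localize $m$ to a very large interval, and I expect to have to iterate them — feeding the sharper divisibility $p^{\nu_p(500!)}\mid u_n$ back into the size estimates and re-applying Lemma \ref{lem:muun} — and then dispose of the residual finite range of pairs $(m,n)$. Making the single threshold $500$ simultaneously push all of $\nu_2(500!),\dots,\nu_7(500!)$ and $\log(500!)$ above what the competing terms demand is the delicate bookkeeping in this step.
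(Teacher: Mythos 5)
Your proposal does not close, and the missing piece is precisely the engine of the paper's proof. The dichotomy you set up for $p\in\{3,5,7\}$ leaves the branch $\nu_p(u_n)\ge\nu_p(m!)$ alive, and there your only upper bound on $\nu_p(u_n)$ is Lemma \ref{lem:muun}. With $X=11$ and $\log n<12X=132$ that lemma gives $\nu_p(u_n)\le 1.2\times10^{12}\cdot\frac{p}{\log p}(\log p+\log Y)(\log n)^2$, which is of size $10^{17}$--$10^{18}$; this only yields $m\lesssim 10^{18}$, nowhere near a contradiction with $m\ge 500$. The ``residual finite range'' you propose to dispose of is therefore $m\le 10^{18}$, $n\le 10^{58}$, and iterating the same estimates cannot shrink it: feeding the divisibility $p^{\nu_p(500!)}\mid u_n$ back into Matveev/Yu-type bounds changes nothing. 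Your $2$-adic step only produces the lower bound $s\ge 2^{\min\{n,494\}}-1$, which is perfectly compatible with the trivial upper bound $\log s\le\sum_p\nu_p(u_n)\log p\le 3\log|u_n|+O(1)\approx 3n\log 2$, so no contradiction results. Moreover your claim that for $n<494$ the trivial bound $\nu_p(u_n)\le\log_p|u_n|$ already falls below $\nu_p(500!)$ is false near the top of that range: $\log_7(494\cdot 2^{494})\approx 179>82=\nu_7(500!)$ and $\log_3(494\cdot 2^{494})\approx 317>247=\nu_3(500!)$.

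What the paper actually does is replace Lemma \ref{lem:muun} by a far sharper, purely computational bound. Lemma \ref{Ho} together with a Hensel-type lifting shows that $p^{k}\mid n2^n+1-t$ pins $n$ to a single residue class modulo $p^{k}(p-1)$ for each admissible starting residue $n_0$, and the explicit lifts $n_{k-1}$, computed for $p=3,5,7$ with $k$ near $125$, $99$, $79$ respectively, all exceed $10^{58}$, which is the bound on $n$ supplied by Theorem \ref{thm:2}. This forces $\nu_p(u_n)<\nu_p(500!)\le\nu_p(m!)$ for all $p\in\{3,5,7\}$, hence $\nu_3(s)\le 125$, $\nu_5(s)\le 99$, $\nu_7(s)\le 79$; a finite search over these exponent ranges then shows $\nu_2(3^a5^b7^c\pm 1)\le 19$, whence $\nu_2(n2^n)\le 19$, i.e.\ $n\le 19$, and finally $500!\le m!\le 3^{125}5^{99}7^{80}+19\cdot 2^{19}+1<10^{200}<100^{500}$ gives the contradiction. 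None of this computational input appears in your sketch, and without it (or a substitute of comparable strength) the lemma is out of reach.
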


\begin{proof} 
Assume $m\ge 500$. Then $\nu_3(m!)\ge \nu_3(500!)= 247,~\nu_5(m!)\ge \nu_5(500!)=124,~\nu_7(m!)\ge \nu_7(500!)= 82$. The goal is to show that
$$
\nu_3(s)\le 124,\quad \nu_5(s)\le 98,\quad \nu_7(s)\le 79.
$$
Assume that $\nu_3(s)\ge 125$. Then $\nu_3(n2^n\pm 1)\ge 125$. We want to show that $n\geq 10^{58}$. The calculation is based on the following easy lemma.

\begin{lem}\label{Ho}
	For each fixed integer $t$ and  odd prime $p$ there are exactly $p-1$ numbers $n$ in $\{0,1,\ldots,p(p-1)-1\}$ such that $p\mid n2^n+1-t$.
\end{lem}

\begin{proof}
	This is implicit in work of Hooley \cite{Ho}. Let $a\in \{0,1,\ldots,p-2\}$. Then $a$ is a residue class modulo $p-1$. By Fermat's Little Theorem, if $n\equiv a\pmod {p-1}$, then $2^n\equiv 2^a\pmod p$. Hence, if $n2^n+1-t\equiv 0\pmod p$, then $n\equiv (t-1)2^{-a}\pmod p$. Thus, the residue class of $n$ modulo $p-1$ determines the residue class of $n$ modulo $p$ and such $n$ is uniquely determined modulo $p(p-1)$ by the Chinese Remainder Theorem. 
\end{proof}

Let $n_0\in \{0,\ldots,p(p-1)-1\}$ be such that $n\equiv n_0\pmod {p(p-1)}$ and $n2^n+1-t\equiv 0\pmod p$. We would like to find information about such $n$ knowing that $p^k\mid n2^n+1-t$. 
The argument is similar to Hensel's lemma. Here is the algorithm. Write 
$n=n_0+p(p-1)\ell$. We write the $p$-adic expansion of $\ell$, namely
$$
\ell=\ell_1+\ell_2 p+\cdots+\ell_k p^{k-1}+\cdots,\qquad \ell_i\in \{0,1,\ldots,p-1\}.
$$
Then
$$
n=n_0+(p-1)p\ell_1+(p-1)p^2 \ell_2+\cdots+(p-1)p^k\ell_k+\cdots.
$$
We find $\ell_i$ recursively in the following way assuming $p^{i+1}\mid n2^n+1-t$. Assume $j\ge 1$ and $\ell_1,\ldots,\ell_{j-1}$ has been determined and $p^{j}\mid n2^n+1-t$. Then setting
$$
n_{j-1}=n_0+(p-1)p\ell_1+\cdots+(p-1)p^{j-1}\ell_{j-1},
$$
we have $n_{j-1}2^{n_{j-1}}+1-t\equiv 0\pmod {p^j}$. Note that this is true for $j=1$. To find $\ell_j$, note that since $n\equiv n_{j-1}\pmod  {(p-1)p^j}$ and $(p-1)p^j=\phi(p^{j+1})$, it follows that
$$
2^{n}\equiv 2^{n_{j-1}}\pmod {p^{j+1}}.
$$
Thus, if $p^{j+1}\mid n2^n+1$, it then follows that 
\begin{eqnarray*}
	0 & \equiv & n2^n+1-t\pmod {p^{j+1}}\\
	& \equiv & n2^{n_{j-1}}+1-t \pmod {p^{j+1}}\\
	& \equiv & (n_{j-1}+(p-1)p^j \ell_j)2^{n_{j-1}}+1-t\pmod {p^{j+1}}\\
	& \equiv & (n_{j-1}2^{n_{j-1}}+1-t)+(p-1)p^j 2^{n_{j-1}}\ell_{j}\pmod {p^{j+1}}.
\end{eqnarray*}
Hence,
$$
\left(\frac{n_{j-1}2^{n_{j-1}}+1-t}{p^j}\right)+(p-1)2^{n_{j-1}}\ell_j\equiv 0\pmod p.
$$
Since $p-1\equiv -1\pmod p$, we get
$$
\ell_j\equiv 2^{-n_{j-1}}\left(\frac{n_{j-1}2^{n_{j-1}}+1-t}{p^j}\right)\pmod p.
$$
And we can continue.
To start, we make $t=0$ and put the above machine to work for $p=3$. In this case, $p(p-1)=6$ and there are two residue classes modulo $6$ such that $3\mid n2^n+1$, namely $n_0\in \{1,2\}$. When $n_0=1$ and $k=124$, the above process
(in Mathematica) gives
$$
n_{123}=14096601226371925780354191137048938941051110799238395669157
$$
while when $n_0=2$ and $k=124$, we get
$$
n_{123}=131916531426323976413079495561663150351720433293832571666642.
$$
In both cases $n_{125}>10^{58}$. This shows that in our range, $\nu_3(n2^n+1)<124$. Hence, $\nu_3(s)<124$. The same works for $p=5$ and $p=7$. We give the data:

For $p=5$, we take $k=99$. We have $p(p-1)=20$ and $n_0\in \{3,4,6,17\}$. We have 
\begin{eqnarray*}
	n_0=3, & &  n_{98}=3402055567449187211072479894744526992631911429806123056986882546322203;\\
	n_0=4, & & n_{98}=5860318539126309542028901497378642627938750361916774422262903402988764;\\
	n_0=6, & & n_{98}=6211271813369046855320209665842033651445457938030806323641242413003566;\\
	n_0=17, & & n_{98}=1900239201139363261324476300084028074211927656029119121314580491907717.\\
\end{eqnarray*}
In all cases $n_{100}>10^{69}>n$, so $\nu_5(n2^n+1)<99$. Hence, $\nu_5(s)<99$. For $p=7$ we take $k=79$. We have $(p-1)p=42$. We have $n_0\in \{5,6,10,26,27,31\}$. The data is
\begin{eqnarray*}
	n_0=5 & & n_{78}=23376667116957912273395168878053596583934978592913658754638298386469;\\
	n_0=6 & & n_{78}=26944746689754581236007271009151875823474002652201195796068635289134;\\
	n_0=10 & & n_{78}=24069582378334816208567848014057127858216459565384781083488608965992;\\
	n_0=26 & & n_{78}=6004003289610317916795511974189307812131311913908480006270103623040;\\
	n_0=27 & & n_{78}=9572082862406986879407614105287587051670335973196017047700440525705;\\
	n_0=31 & & n_{78}=6696918550987221851968191110192839086412792886379602335120414202563.
\end{eqnarray*}
In all cases $n_{80}>10^{66}>n$, so $\nu_7(n2^n+1)<79$, so $\nu_7(s)<79$.

When $t=2$, we get information about the exponents of the small primes in $W_n$. Let $p=3$. In this case, $p(p-1)=6$ and there are two residue classes modulo $6$ such that $3\mid n2^n-1$ according to Lemma \ref{FA}, namely $n_0\in \{4,5\}$. When $n_0=4$ and $k=126$, the above process
(in Mathematica) gives
$$
n_{125}=1324117109863992278171562286849551012905296843274331852235486
$$
while when $n_0=5$ and $k=126$, we get
$$
n_{125}=2024168377236220040978157856035277257188964269091189786706895.
$$
In both cases, $n_{125}>10^{58}$. This shows that in our range, $\nu_3(n2^n-1)<126$. Hence, $\nu_3(s)<125$. The same works for $p=5$ and $p=7$. We give the data:

For $p=5$, we take $k=99$. We have $p(p-1)=20$ and $n_0\in \{7,13,14,16\}$. We have 
\begin{eqnarray*}
	n_0=7, & &  n_{98}=50556828220234104829713905612151729929047533964652111403956962145639
	67;\\
	n_0=13, & & n_{98}=246611946565139989425565633613382073939085689370031037905766823665953;\\
	n_0=14, & & n_{98}=27048749182422623203819872362474977092459246214806824031817876803325
	14;\\
	n_0=16, & & n_{98}=30558281924849996336732954047108887327526321975947143045601266903473
	16.\\
\end{eqnarray*}
In all cases $n_{98}>10^{58}>n$, so $\nu_5(n2^n-1)<99$. Hence, $\nu_5(s)<99$. For $p=7$ we take $k=79$. We have $(p-1)p=42$. We have $n_0\in \{2,4,15,23,25,36\}$. The data is
\begin{eqnarray*}
	n_0=2 & & n_{78}=3070945089242253569511128531703 7993482895779452876147449227324975278;\\
	n_0=4 & & n_{78}=4084723421753861202636449976
	25665865881992651094052930276211831026;\\
	n_0=15 & & n_{78}=2157106343186897994855204068753 0035249121277125785855969465402463679;\\
	n_0=23 & & n_{78}=13336787065074941338511628413173 704711092112773870968700859130211849;\\
	n_0=25 & & n_{78}=177811361695229804768633019014899 54637685659330099231678644406594455;\\
	n_0=36 & & n_{78}=4198399604521385591952383783665746 477317610446780677221097207700250.
\end{eqnarray*}
In all cases $n_{80}>10^{58}>n$, so $\nu_7(n2^n-1)<79$, so $\nu_7(s)<79$.

 Now we calculate 
$$
\max\,^{*}\{\nu_2(3^a\cdot 5^b\cdot 7^c\pm 1): 0\le a\le 125,~0\le b\le 99,~0\le c\le 79\}
$$
where $^*$ means that we calculate the maximum only over the triples $(a,b,c)$ such that the number we apply $\nu_2$ to is nonzero (that is, we exclude the case of the negative sign when $a=b=c=0$). We obtain that the above maximum is 
at most $19$. Since $\nu_2(m!)\ge \nu_2(500!)=494$, we get $n\le 19$. Thus, 
$$
100^{500}<(m/e)^m<m!=|s\pm (n2^n\pm 1)|<3^{125}\cdot 5^{99}\cdot 7^{80}+19\cdot 2^{19}+1<10^{200},
$$
a contradiction. This shows that $m\le 500$. 

\end{proof}

Lemma \ref{Ho} gives us much more than just that $m\le 500$. It also suggests how we should go about finishing the proof. Namely, we take $m\in [2,500]$ and fix the sign $\varepsilon\in \{\pm 1\}$  and calculate the largest possible power of $p$ in $C_n+\varepsilon m!$ for $p\in \{3,5,7\}$ with a similar procedure. 
Namely, we first loop over all possible $n_0$ to find the $p-1$ values in $\{0,1,\ldots,(p-1)p-1\}$ such that if $n\equiv n_0\pmod {(p-1)p}$, then $n2^n\pm 1-\varepsilon m!\equiv 0\pmod p$. 
Then we get : 
\begin{itemize}
\item Case 1 $(C_n-m!)$, we have $\nu_3(s)<126,~\nu_5(s)<100,~\nu_7(s)<80$.
\item Case 2 $(W_n+m!)$, we have $\nu_3(s)<127,~\nu_5(s)<100,~\nu_7(s)<80$.
\item Case 3 $(C_n+m!)$, we have $\nu_3(s)<129,~\nu_5(s)<100,~\nu_7(s)<80$.
\item Case 4 $(W_n-m!)$, we have $\nu_3(s)<129,~\nu_5(s)<100,~\nu_7(s)<80$.
\end{itemize}
Indeed we apply the above algorithm to compute 
$n_k$ for $(p,k)=(3,126),~(5,100),\\ ~(7,80)$. In all the four cases above we get that $n_k>10^{58}$ for all choices of $m\in [1,500]$, provided the lower bounds 
on $\nu_p(s)$ exceed the numbers indicated above.	Now ran another loop over 
$m\in [2,500]$, $a\in [0,130],~b\in [0,100],~c\in [0,80]$ and showed that $\nu_2(3^a\cdot 5^b\cdot 7^c\pm 1\pm m!)<30$, whenever the number inside  $\nu_2$ is nonzero.  This shows that $n\le 30$. Then we generated all numbers of the form $C_n\pm m!$ and $W_n\pm m!$ for $n\in [0,30],~m\in [2,500]$ and intersected this set with the set of numbers $\{\pm 3^a\cdot 5^b\cdot 7^c\}$ where $0\le a\le 130,~0\le b\le 100,~0\le c\le 80$. This intersection is 
$$
\{-25,-21,-7,-5-3,-1,3,5,7,9,15,21,25,27,49,63,135,175,729,2025,5103\}.
$$
The corresponding solutions are 
\begin{eqnarray*}
1& = &  W_0+2! =  C_1-2!^\dag  =  W_2-3!  =  C_3-4!^\dag;\,~ -1  =  C_0-2! =  W_1-2! ^\dag =  W_3-4!^\dag;\\
3  & = & C_0+2!  =  W_1+2!  =  C_2-3!;  \qquad\qquad\quad\,~\,~\,~ -3 =  W_0-2!  =  C_1-3!;  \\
5 & = & W_0+3!  =  C_1+2! =  W_2-2! ;\qquad\qquad\quad\,~\,~ -5  =  C_0-3!  =  W_1-3!;   \\
7 & = & C_0+3!  =  W_1+3!  =  C_2-2! ;\qquad \qquad\quad\,~\,~\,~ -7  =  W_0-3!; \\
3^2 &  = & C_1+3! =  W_2+2!; \\
3\cdot 5 & = & C_2+3!;  \\
3\cdot 7 & = & W_3-2!; \qquad\qquad\qquad\qquad\qquad\qquad\,~\,~\,~\,~\,~\,~\,~\,~ -3\dot 7 =  C_1-4!;   \\
5^2 & = & W_3+2!;  \qquad\qquad\qquad\qquad\qquad\qquad\,~\,~\,~\,~\,~\,~\,~\,~  -5^2   =  W_1-4!; \\
3^3 & = & C_3+2!;  \\
7^2 & = & C_3+4!;  \\
3^2\cdot 7 & = & C_4-2!;  \\
3^3\cdot 5 & = & W_5-4!;  \\
5^2\cdot 7 & = & W_7-5!;  \\
3^6 & = & C_2+6!;  \\
3^4\cdot 5^2 & = & C_8-4!;  \\
3^5\cdot 7 & = & W_4+7!; 
\end{eqnarray*}
The solutions indicated with ${}^\dag$ are degenerate.

\bibliographystyle{plain}

\end{document}